\theoremstyle{plain}
	\newtheorem{theorem}{Theorem}[section]
	\newtheorem*{theorem*}{Theorem}
	\newtheorem{lemma}[theorem]{Lemma}
	\newtheorem{corollary}[theorem]{Corollary}
\theoremstyle{definition}
	\newtheorem{example}[theorem]{Example}
\theoremstyle{remark}
	\newtheorem{remark}[theorem]{Remark}
\newcommand{\cK}	{\mathcal K}
\newcommand{\cN}	{\mathcal N}
\newcommand{\cP}	{\mathcal P}
\newcommand{\cQ}	{\mathcal Q}
\newcommand{\Cp}	{C_p(\R^n)}
\newcommand{\gln}	{\operatorname{GL}(n)}
\newcommand{\h}[2]	{h\left( #1 , #2 \right)}
\newcommand{\Lp}	{L_p}
\newcommand{\N}		{\mathbb N}
\newcommand{\oDeltapp}	{\operatorname{\Delta}_p^+}
\newcommand{\oDeltapm}	{\operatorname{\Delta}_p^-}
\newcommand{\oPipp}	{\operatorname{\Pi}_p^+}
\newcommand{\oPippg}	{\operatorname{\Pi}_p^{+>}}
\newcommand{\oPippl}	{\operatorname{\Pi}_p^{+<}}
\newcommand{\oPipm}	{\operatorname{\Pi}_p^-}
\newcommand{\oPipmg}	{\operatorname{\Pi}_p^{->}}
\newcommand{\oPipml}	{\operatorname{\Pi}_p^{-<}}
\newcommand{\oPhi}	{\operatorname{\Phi}}
\newcommand{\oPsi}	{\operatorname{\Psi}}
\newcommand{\pp}	{+_p}
\newcommand{\R}		{\mathbb R}
\newcommand{\sln}	{\operatorname{SL}(n)}
\DeclareMathOperator{\vol}{vol}
\DeclareMathOperator{\aff}{aff}
\DeclareMathOperator{\conv}{conv}
\DeclareMathOperator{\lin}{lin}
\DeclareMathOperator{\sgn}{sgn}
\begin{document}
	\title{$\sln$-Contravariant $\Lp$-Minkowski Valuations}
	\author{Lukas Parapatits}
	\address
	{
		Fachbereich Mathematik \\
		Universität Salzburg \\
		Hellbrunner Str. 34 \\
		5020 Salzburg \\
		Austria
	}
	\email{lukas.parapatits@sbg.ac.at}
	\thanks{The author was supported by Austrian Science Fund (FWF): P22388 and Austrian Science Fund (FWF): P23639}
	\subjclass[2010]{Primary 52A20, 52B45}

	\begin{abstract}
		All $\sln$-contravariant $\Lp$-Minkowski valuations on polytopes are completely classified.
		The prototypes of such valuations turn out to be the asymmetric $\Lp$-projection body operators.
	\end{abstract}

	\maketitle

	\section{Introduction}
		%%%%%%%%%%%%%%%%%%%%%%%%%%%%%%%%%%%%%%%%%%%%%%%%%%%%%%%%%%%%%%%%%%%%%%%%%%%%%%%%
A map $\oPhi$ defined on the set of convex bodies, i.e. nonempty compact convex subsets of $\R^n$,
is called a valuation, if it satisfies
\begin{equation*}
	\oPhi( K \cup L ) + \oPhi( K \cap L ) = \oPhi( K ) + \oPhi( L )
\end{equation*}
for all convex bodies $K,L$ such that $K \cup L$ is also convex.
Ever since Hadwiger's famous characterization theorem (see e.g. \cite{Schneider1993}), which describes all rigid motion invariant scalar valuations,
classifications of valuations have become the focus of intense research
(see e.g. \cites{Alesker1999, Alesker2000, Alesker 2001, Bernig2009, BernigBroecker2007, BernigFu_1,
Klain1996, Klain1997, Klain2000, LudwigReitzner1999, LudwigReitzner2010, McMullen1993}).
In the last few years a theory of convex body valued valuations emerged
(see e.g. \cites{AbardiaBernig2011, Haberl2008, Haberl2009, Haberl_1, Haberl_2, Ludwig2002_1, Ludwig2002_2, Ludwig2003, Ludwig2005, Ludwig2006, Ludwig2010, ParapatitsSchuster_1, SchneiderSchuster2006, Schuster2010, SchusterWannerer_1, Wannerer_1}).
One important type of such valuations are Minkowski valuations, i.e. valuations with respect to Minkowski addition,
which is given by $K + L = \{x+y : x \in K, y \in L\}$.
A prominent example of such a valuation is the projection body operator, which was introduced by Minkowski at the turn of the previous century (see e.g. \cite{Schneider1993}).
The projection body of a convex body $K$ encodes the information obtained from the $(n-1)$-dimensional volumes of projections of $K$
onto $(n-1)$-dimensional subspaces into a single convex body.
Projection bodies and their generalizations found applications in many areas such as convexity, stochastic geometry, functional analysis and geometric tomography
(see e.g. \cites{AbardiaBernig2011, CianchiLutwakYangZhang2009, HaberlSchuster2009_1, HaberlSchuster2009_2, Gardner2006,
LutwakYangZhang2000_1, LutwakYangZhang2002_1, LutwakYangZhang2002_2, LutwakYangZhang2004, LutwakYangZhang2010,
SchneiderWeil2008, SchusterWannerer_1, Steineder2008, Zhang1999}).

In two far reaching articles Ludwig \cites{Ludwig2002_1, Ludwig2005} characterized the projection body operator up to a constant
as the only continuous and homogeneous Minkowski valuation, which is $\sln$-contravariant.
A Minkowski valuation $\oPhi$ is called $\sln$-contravariant, if it satisfies
\begin{equation*}
	\oPhi( \phi K ) = \phi^{-t} \oPhi K
\end{equation*}
for all convex bodies $K$ and all $\phi$ in the special linear group $\sln$.
Moreover, Ludwig proved similar characterizations for Minkowski valuations defined on polytopes without continuity assumptions.
Very recently, Haberl \cite{Haberl_2} showed that the homogeneity assumptions in Ludwig's characterization results are not necessary.

Another important class of convex body valued valuations are $\Lp$-Minkowski valuations for $p > 1$.
They are valuations with respect to $\Lp$-Minkowski addition, which is given by
\begin{equation*}
	\h{K \pp L}{.}^p = \h{K}{.}^p + \h{L}{.}^p ,
\end{equation*}
where $\h{K}{u} := \max_{x \in K} \langle x,u \rangle$ denotes the support function of $K$.
This addition is the basis of the $\Lp$-Brunn-Minkowski theory
(see e.g. \cites{CampiGronchi2002, LutwakYangZhang2000_2, HaberlLudwig2006, HaberlSchuster2009_2,
Ludwig2005, LutwakYangZhang2000_1, LutwakYangZhang2002_1, LutwakYangZhang2002_2, LutwakYangZhang2004,
Stancu2002, Stancu2003, WernerYe2008}).
Important operators in this theory are the symmetric and asymmetric $\Lp$ analogs of the projection body.
The symmetric $\Lp$-projection body was first introduced by Lutwak, Yang and Zhang in \cite{LutwakYangZhang2000_1}.
Volume inequalities for symmetric and asymmetric $\Lp$-projection bodies are the geometric core of affine Sobolev inequalities,
which turn out to be stronger than their classical counterparts
(see e.g. \cites{CianchiLutwakYangZhang2009, HaberlSchuster2009_1, LutwakYangZhang2002_2, Zhang1999}).
In \cite{Ludwig2005} Ludwig also extended her characterization of the projection body operator to the $\Lp$ case.

In this article we remove the assumption of homogeneity in this characterization of the $\Lp$-projection body operators.
The set of convex polytopes (respectively convex bodies) containing the origin is denoted by $\cP^n_o$ (respectively $\cK^n_o$).
The asymmetric $\Lp$-projection body operator $\oPipp \colon \cP^n_o \to \cK^n_o$ is defined by
\begin{equation*}
	\h{ \oPipp P}{.}^p = \sum_{\substack{v \in \cN(P) \\ 0 \not\in F(P,v)}} \vol_{n-1}(F(P,v)) \h{P}{v}^{1-p} \langle v, . \rangle_+^p
\end{equation*}
for all $P \in \cP^n_o$.
Here $\cN(P)$ denotes the set of all outer unit normals of facets of $P$ and $F(P,v)$ denotes the facet corresponding to $v \in \cN(P)$.
Moreover $\langle v, . \rangle_+$ denotes the positive part of $\langle v, . \rangle$.
Similarly we define $\oPipm$ by replacing $\langle v, . \rangle_+$ with $\langle v, . \rangle_-$,
which denotes the negative part of $\langle v, . \rangle$.

\begin{theorem*}
	An operator $\oPhi \colon \cP^n_o \rightarrow \cK^n_o$, where $n \geq 3$, is an $\sln$-contravariant $\Lp$-Minkowski valuation,
	if and only if there exist constants $c_1, c_2 \geq 0$ such that
	\begin{equation*}
		\oPhi P = c_1 \oPipp P \pp c_2 \oPipm P
	\end{equation*}
	for all $P \in \cP^n_o$.
\end{theorem*}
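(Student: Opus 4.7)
The plan is to linearize via the substitution $\psi(P, u) := \h{\oPhi P}{u}^p$. Since $\h{K \pp L}{\cdot}^p = \h{K}{\cdot}^p + \h{L}{\cdot}^p$, the valuation property of $\oPhi$ becomes the statement that $\psi(\cdot, u)$ is an ordinary real-valued valuation on $\cP^n_o$ for every fixed $u \in \R^n$, while $\sln$-contravariance rewrites as $\psi(\phi P, u) = \psi(P, \phi^{-1} u)$ for all $\phi \in \sln$. Sufficiency of the claimed form is immediate from the definitions of $\oPipp$ and $\oPipm$; the task is the converse.

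Adopting the strategy of Ludwig and its refinement by Haberl, I would reduce everything to the standard simplex $T^n := \conv\{0, e_1, \ldots, e_n\}$. A standard triangulation and inclusion-exclusion argument shows that $\psi$ on a general $P \in \cP^n_o$ is determined by its values on simplices with $0$ as a vertex. Every such simplex is of the form $\phi T^n$ for some $\phi \in \gln$, so after isolating the scalar factor that brings $\phi$ into $\sln$, the entire operator $\oPhi$ is encoded in the single function $Y(u) := \psi(T^n, u)$ together with its behaviour under dilations of $T^n$.

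To pin down $Y$, symmetries and dissections are combined. The stabilizer of $T^n$ in $\sln$ is represented by permutation matrices of $e_1, \ldots, e_n$ with determinant $+1$; this, together with further $\sln$-contravariance relations obtained by comparing $\phi_1 T^n$ with $\phi_2 T^n$ for different choices, forces $Y$ to be a symmetric function of $u_1, \ldots, u_n$ (the hypothesis $n \geq 3$ is essential here). Additional constraints arise from dissecting $T^n$ by the hyperplane through $0, e_3, \ldots, e_n$ and a point $\lambda e_1 + (1 - \lambda) e_2$ on the edge $[e_1, e_2]$: both pieces are linear images of $T^n$, and an $\sln$-scaling argument (applied to polytopes confined to a coordinate hyperplane, using $\phi_\mu := \diag(\mu, \ldots, \mu, \mu^{-(n-1)})$) first shows that $\psi$ vanishes on lower-dimensional elements of $\cP^n_o$, so only the full-dimensional pieces contribute. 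The valuation identity then becomes a Cauchy-type equation in $\lambda$ for $Y$, whose symmetric, convex-body-admissible solutions are precisely $Y(u) = c_1 (u_1 + \cdots + u_n)_+^p + c_2 (u_1 + \cdots + u_n)_-^p$ with $c_1, c_2 \geq 0$; this matches $c_1 \h{\oPipp T^n}{u}^p + c_2 \h{\oPipm T^n}{u}^p$ up to the normalization built into $\oPipp$.

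The main obstacle is the final step. Without a homogeneity hypothesis, the governing functional equation is a genuine Cauchy equation on $(0,1)$, so purely algebraic constraints do not rule out pathological solutions; it is the positivity and convexity forced by $\psi$ being the $p$-th power of a support function that ultimately select the polynomial solution and the correct exponent $p$. This mirrors the innovation of Haberl for the covariant $\sln$-setting, and here one must adapt his techniques to the contravariant case, where the appearance of the positive- and negative-part functions $\langle v, \cdot \rangle_\pm^p$ in the $\Lp$-projection formulas is precisely what produces the two independent constants $c_1, c_2$ in the classification. Extending from $T^n$ back to arbitrary $P \in \cP^n_o$ is then the routine triangulation step already invoked in the first reduction.
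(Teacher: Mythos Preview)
Your outline matches the paper's strategy closely: pass to $\psi(P,u)=h(\oPhi P,u)^p$, prove simplicity, reduce to $sT^n$, derive a functional equation from the dissection of $sT^n$ by a hyperplane through the origin, and resolve the resulting Cauchy equation using nonnegativity. Two points, however, need more care than your sketch indicates.

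First, the diagonal scaling $\phi_\mu=\diag(\mu,\ldots,\mu,\mu^{-(n-1)})$ does not by itself force $\psi$ to vanish on lower-dimensional polytopes: it only relates values of $\psi$ at different scales. For $\dim P\le n-2$ the paper instead uses block matrices $\bigl(\begin{smallmatrix}A&0\\B&I\end{smallmatrix}\bigr)$ fixing $P$ to show that $\psi(P,\cdot)$ is constant on a dense set, hence identically zero by continuity and $p$-homogeneity; for $\dim P=n-1$ a separate dissection argument is required, exploiting that $\lambda^p+(1-\lambda)^p<1$ for $p>1$.

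Second, the Cauchy equation coming from the dissection is really an equation in the scale parameter $s$, evaluated only at directions $u\in\{e_1,e_2\}^\perp$; its solution (via nonnegativity alone---convexity is not invoked) yields just the homogeneity $\psi(sT^n,\pm e_i)=s^{n-p}\psi(T^n,\pm e_i)$. Determining $Y(u)$ for general $u$ is a further, separate step: an induction on the number of nonzero coordinates of $u$, in which one re-enters the same functional equation with $\lambda$ chosen so that $\phi_\lambda^{-1}u$ (or $\psi_\lambda^{-1}u$) has one fewer nonzero coordinate. This induction is what actually pins down the form $c_1\langle e_1+\cdots+e_n,u\rangle_+^p+c_2\langle e_1+\cdots+e_n,u\rangle_-^p$, and it does not fall out of the Cauchy equation automatically.
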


All the above characterizations deal with valuations defined on convex polytopes (or convex bodies) containing the origin.
Building on Ludwig's characterization of the projection body operator Schuster and Wannerer \cite{SchusterWannerer_1}
obtained a classification of Minkowski valuations on the set of \textit{all} convex bodies
and showed that an additional operator arises.
Our second main theorem establishes a corresponding classification for $\Lp$-Minkowski valuations
defined on the set of \textit{all} convex polytopes, denoted by $\cP^n$.
As it turns out, also in this case new operators arise.
For the definitions of the operators $\oPippg$, $\oPipmg$, $\oPippl$ and $\oPipml$ see Section \ref{se: prelimSLn}.

\begin{theorem*}
	An operator $\oPhi \colon \cP^n \rightarrow \cK^n_o$, where $n \geq 3$, is an $\sln$-contravariant $\Lp$-Minkowski valuation,
	if and only if there exist constants $c_1, c_2, c_3, c_4 \geq 0$ such that
	\begin{equation*}
		\oPhi P = c_1 \oPippg P \pp c_2 \oPipmg P \pp c_3 \oPippl P \pp c_4 \oPipml P
	\end{equation*}
	for all $P \in \cP^n$.
\end{theorem*}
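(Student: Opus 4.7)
The plan is to reduce the theorem to the first (already established) classification on $\cP^n_o$, and then to pin down the behavior of $\oPhi$ on polytopes away from the origin via a careful analysis on simplices using $\sln$-contravariance and the valuation identity. For sufficiency, each of $\oPippg$, $\oPipmg$, $\oPippl$, $\oPipml$ is an $\sln$-contravariant $\Lp$-Minkowski valuation by construction (cf.\ Section~\ref{se: prelimSLn}), and $\pp$-combinations with nonnegative coefficients preserve both properties, so the stated form is automatically a valuation of the required type.

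For necessity, note that a polytope $P \in \cP^n_o$ satisfies $\h{P}{v} \geq 0$ at every outer unit normal $v$, so the facets that would contribute to $\oPippl P$ and $\oPipml P$ do not occur. Restricting $\oPhi$ to $\cP^n_o$ and invoking the first theorem therefore yields nonnegative constants $c_1, c_2$ with $\oPhi P = c_1 \oPipp P \pp c_2 \oPipm P$ on $\cP^n_o$, which agrees on $\cP^n_o$ with the right-hand side of the second theorem for \emph{any} choice of $c_3, c_4 \geq 0$. It remains to evaluate $\oPhi$ on polytopes possessing at least one facet with negative support value.

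The next step is to compute $\oPhi$ on a small finite family of simplices lying in one open halfspace bounded by a hyperplane through the origin. Via $\sln$-contravariance, each orbit of such simplices admits a canonical representative, and the action of the stabilizer of the representative inside $\sln$ forces $\oPhi$ of that representative to be a $\pp$-combination of the form $c_3 \oPippl P \pp c_4 \oPipml P$, with $c_3, c_4 \geq 0$. Nonnegativity is forced by the requirement that $\oPhi P$ be a support function of an element of $\cK^n_o$, hence subadditive; a negative coefficient would violate subadditivity for a suitable choice of $P$.

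The main obstacle is the extension from these canonical simplices to arbitrary polytopes in $\cP^n$. Every polytope admits many triangulations, and the valuation property must be used to assemble $\oPhi P$ from values of $\oPhi$ on simplices that may include some containing the origin (where $\oPhi$ is already fixed by the first theorem) and others disjoint from the origin (where $\oPhi$ is newly determined by $c_3, c_4$). The delicate point is ensuring compatibility across dissections where the origin lies on a lower-dimensional face, so that the four-constant formula is internally coherent across all configurations. I expect to prove this by induction on the number of vertices, mirroring Ludwig's original scheme, but with additional bookkeeping of the sign of $\h{P}{v}$ at each facet to track which of the four operators absorbs each term. The fact that on the intersection $\cP^n_o$ of the two regimes the new operators $\oPippl, \oPipml$ vanish identically is precisely what guarantees no conflict between the two halves of the analysis.
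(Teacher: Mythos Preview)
Your plan has a genuine gap at its second step. You assert that on an $n$-simplex $T$ lying in an open halfspace not meeting the origin, the stabilizer argument forces $\oPhi T$ to be of the form $c_3\,\oPippl T \pp c_4\,\oPipml T$. This is false: such a simplex has facets with \emph{positive} support value as well as facets with negative support value (for instance $e_1+T^n$ in \th\ref{eval T^n} has one of each), so $\oPippg T$ and $\oPipmg T$ are nonzero in general. The value $\oPhi T$ must therefore involve all four operators, and you cannot read off $c_3,c_4$ from $\oPhi T$ without first isolating them from the already-known $c_1,c_2$ contribution. But you are working with $\cK^n_o$-valued maps throughout, where $\pp$ has no subtraction, so you have no mechanism to ``strip off'' the $c_1\oPippg \pp c_2\oPipmg$ part.

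The paper avoids both difficulties. It passes to the $\Cp$-valued valuation $P\mapsto \h{\oPhi P}{\cdot}^p$, where subtraction is available. It then evaluates $\oPhi$ not on $n$-simplices away from the origin but on the $(n-1)$-simplex $T^{n-1}=\conv\{e_1,\dots,e_n\}$, repeating the functional-equation argument of \th\ref{class sln contra} to obtain two constants $d_3,d_4$. Subtracting $d_3\h{\oPippg(\cdot)}{\cdot}^p+d_4\h{\oPipmg(\cdot)}{\cdot}^p$ kills the value on $sT^{n-1}$; combined with \th\ref{sln contravariant Pn zero} this makes the remainder \emph{simple}. A simple valuation on $\cP^n$ is determined by its restriction to $\cP^n_o$ (\th\ref{simple determined cPno}), and there the $\cP^n_o$ classification applies. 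This replaces your vague triangulation/induction extension with a clean two-line reduction; the role of $(n-1)$-dimensional simplices, absent from your outline, is essential.
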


	\section{Preliminaries}
		%%%%%%%%%%%%%%%%%%%%%%%%%%%%%%%%%%%%%%%%%%%%%%%%%%%%%%%%%%%%%%%%%%%%%%%%%%%%%%%%
As a general reference for the concepts introduced in the following sections see \cites{Gruber2007, KlainRota1997, Schneider1993}.
Throughout this article $n \geq 1$ will denote the dimension of the Euclidean space $\R^n$.
The vectors $e_1, \ldots, e_n$ are the standard basis vectors.
We denote by $\langle x , y \rangle$ the inner product of two vectors $x,y \in \R^n$.
The orthogonal complement of $x$ is denoted by $x^\perp$ and the norm induced by the inner product is denoted by $\| x \|$.
The unit ball in $\R^n$ is written as $B^n$ and its boundary as $S^{n-1}$.
The $m$-dimensional volume in an $m$-dimensional subspace will be written as $\vol_m$ for $1 \leq m \leq n$.
The general linear group and the special linear group are denoted by $\gln$ and $\sln$, respectively.
We denote by $\lin$ the linear hull and by $\conv$ the convex hull of a subset of $\R^n$.

%%%%%%%%%%%%%%%%%%%%%%%%%%%%%%%%%%%%%%%%%%%%%%%%%%%%%%%%%%%%%%%%%%%%%%%%%%%%%%%%
A nonempty compact convex subset of $\R^n$ is called a convex body.
The set of all convex bodies in $\R^n$ is denoted by $\cK^n$.
We denote by $\dim$ the dimension of a convex body.
The convex hull of a finite set of points in $\R^n$ is called a convex polytope.
The set of all convex polytopes in $\R^n$ is denoted by $\cP^n$.
Clearly $\cP^n \subseteq \cK^n$.
The subset of all convex bodies and convex polytopes containing the origin $o$ are denoted by $\cK^n_o$ and $\cP^n_o$, respectively.

%%%%%%%%%%%%%%%%%%%%%%%%%%%%%%%%%%%%%%%%%%%%%%%%%%%%%%%%%%%%%%%%%%%%%%%%%%%%%%%%
The Minkowski sum of two convex bodies $K$ and $L$, denoted $K+L$, is defined by
\begin{equation*}
	K + L = \{x+y : x \in K, y \in L\} .
\end{equation*}
The scalar multiple of a convex body $K$ and $s \geq 0$, denoted $sK$, is defined by
\begin{equation*}
	sK = \{sx : x \in K\} .
\end{equation*}
Note that $\cK^n$, $\cP^n$, $\cK^n_o$ and $\cP^n_o$ are closed under these operations.
We equip $\cK^n$ with the Hausdorff metric $\delta$, defined by
\begin{equation*}
	\delta(K,L) = \min\{ \epsilon > 0 : K + \epsilon B^n \subseteq L, L + \epsilon B^n \subseteq K \}
\end{equation*}
for all $K,L \in \cK^n$.
Note that the above operations are continuous and that $\cK^n$ is a complete metric space.
Furthermore $\cP^n$, $\cK^n_o$ and $\cP^n_o$ are closed sets in this topology.

%%%%%%%%%%%%%%%%%%%%%%%%%%%%%%%%%%%%%%%%%%%%%%%%%%%%%%%%%%%%%%%%%%%%%%%%%%%%%%%%
Every $K \in \cK^n$ is characterised by its support function
\begin{equation*}
	\h{K}{u} := \max_{x \in K} \langle x,u \rangle \quad u \in \R^n .
\end{equation*}
A function $h \colon \R^n \to \R$ is a support function, if and only if it is sublinear, i.e.
\begin{equation*}
	h(u+v) \leq h(u) + h(v) \quad \text{and} \quad h(su) = sh(u)
\end{equation*}
for all $u,v \in \R^n$ and $s > 0$.
Note that sublinearity implies convexity and therefore continuity.
Because of the homogeneity, a support function is determined by its values on $S^{n-1}$.
Minkowski addition and scalar multiplication are compatible with the map $K \mapsto \h{K}{.}$, i.e.
\begin{equation*}
	\h{K+L}{.} = \h{K}{.} + \h{L}{.} \quad \text{and} \quad \h{sK}{.} = s\h{K}{.}
\end{equation*}
for all $K,L \in \cK^n$ and $s \geq 0$.
The Hausdorff distance of two convex bodies $K,L \in \cK^n$ can be calculated by
\begin{equation*}
	\delta(K,L) = \| \h{K}{.} - \h{L}{.} \|_\infty ,
\end{equation*}
where $\| . \|_\infty$ denotes the infinity norm on $S^{n-1}$.
In particular, we can think of $\cK^n$ with Minkowski addition as a subsemigroup of the abelian group $C(\R^n)$.

%%%%%%%%%%%%%%%%%%%%%%%%%%%%%%%%%%%%%%%%%%%%%%%%%%%%%%%%%%%%%%%%%%%%%%%%%%%%%%%%
Throughout this article $p > 1$ will denote a real number.
Note that a convex body $K$ contains the origin, if and only if $\h{K}{.} \geq 0$.
It is easy to see that $\sqrt[p]{\h{K}{.}^p + \h{L}{.}^p}$ defines a non-negative sublinear function for all $K,L \in \cK^n_o$.
It is therefore the support function of a unique convex body in $\cK^n_o$.
The $\Lp$-Minkowski sum of $K,L \in \cK^n_o$, denoted $K \pp L$, is defined by
\begin{equation*}
	\h{K \pp L}{.}^p = \h{K}{.}^p + \h{L}{.}^p .
\end{equation*}
By identifying $K \in \cK^n_o$ with $\h{K}{.}^p$ we can think of $\cK^n_o$ with $\Lp$-Minkowski addition as a subsemigroup of $C(\R^n)$.
Clearly $\h{K}{.}^p$ is a $p$-homogeneous function for all $K \in \cK^n_o$.
We denote by $\Cp$ the set of all $p$-homogeneous functions in $C(\R^n)$.

%%%%%%%%%%%%%%%%%%%%%%%%%%%%%%%%%%%%%%%%%%%%%%%%%%%%%%%%%%%%%%%%%%%%%%%%%%%%%%%%
Cauchy's functional equation
\begin{equation}\label{cauchy}
	f(a+b) = f(a) + f(b) \quad \forall a,b \in \R
\end{equation}
will be important for us.
Let $f \colon \R \to \R$ be a non-linear function which satisfies \eqref{cauchy}.
It is a well known fact that the graph of such a function $f$ is dense in $\R^2$.
An equivalent statement is that every bounded open interval has a dense image under $f$.

Let $f \colon (0,+\infty) \to \R$ be a non-linear function which satisfies \eqref{cauchy} for all $a,b \in (0,+\infty)$.
It is easy to see that we can extend $f$ to an odd function $f \colon \R \to \R$ which satisfies \eqref{cauchy} for all $a,b \in \R$.
Therefore every bounded open interval which is a subset of $(0,+\infty)$ has a dense image under $f$.

	\section{Valuations}
		%%%%%%%%%%%%%%%%%%%%%%%%%%%%%%%%%%%%%%%%%%%%%%%%%%%%%%%%%%%%%%%%%%%%%%%%%%%%%%%%
Let $\cQ^n$ be a subset of $\cK^n$ and let $A$ be an abelian semigroup.
A map $\oPhi \colon \cQ^n \to A$ is called a valuation, if it satisfies
\begin{equation*}
	\oPhi( K \cup L ) + \oPhi( K \cap L ) = \oPhi( K ) + \oPhi( L )
\end{equation*}
for all $K,L \in \cQ^n$ with $K \cup L, K \cap L \in \cQ^n$.
Furthermore, if $A$ has an identity $0$, we assume $\Phi(\emptyset) = 0$, even if $\emptyset \not\in \cQ^n$.
If $A$ is $\cK^n$ with Minkowski addition, then $\oPhi$ is called a Minkowski valuation.
Note that $\oPhi$ is a Minkowski valuation, if and only if $K \mapsto \h{\oPhi K}{.} \in C(\R^n)$ is a valuation.
If $A$ is $\cK^n_o$ with $\Lp$-Minkowski addition, then $\oPhi$ is called an $\Lp$-Minkowski valuation.
Note that $\oPhi$ is an $\Lp$-Minkowski valuation, if and only if $K \mapsto \h{\oPhi K}{.}^p \in C(\R^n)$ is a valuation.
It is easy to see that $K \cup L \in \cK^n$ implies $K \cap L \in \cK^n$ for all $K,L \in \cK^n$.
The same holds for $\cK^n_o$.

Let $A$ be an abelian monoid with identity $0$.
A valuation $\oPhi \colon \cQ^n \to A$ is called simple, if $\oPhi K = 0$ for all $K \in \cQ^n$ with $\dim K < n$.

%%%%%%%%%%%%%%%%%%%%%%%%%%%%%%%%%%%%%%%%%%%%%%%%%%%%%%%%%%%%%%%%%%%%%%%%%%%%%%%%
A $k$-dimensional simplex is the convex hull of $k+1$ affinely independent points for $k \in \{0,\ldots,n\}$.
The $n$-dimensional standard simplex, denoted $T^n$, is defined by
\begin{equation*}
	T^n = \conv\{ o, e_1, \ldots, e_n \} .
\end{equation*}

We need some general theorems on valuations.
With the exception of the next theorem due to Volland \cite{Volland1957} (see also \cite{KlainRota1997}) we give proofs for the sake of completeness.

\begin{theorem}\th\label{incl excl Pn}
	Let $A$ be an abelian group and $\oPhi \colon \cP^n \to A$ a valuation.
	Then $\oPhi$ satisfies the inclusion exclusion principle, i.e.
	\begin{equation*}
		\oPhi(P_1 \cup \ldots \cup P_m) =
		\sum_{\emptyset \neq S \subseteq \{1,\ldots,m\}} (-1)^{|S|-1} \oPhi \left( \bigcap_{i \in S} P_i \right)
	\end{equation*}
	for all $m \in \N$ and $P_1, \ldots, P_m \in \cP^n$ with $P_1 \cup \ldots \cup P_m \in \cP^n$.
\end{theorem}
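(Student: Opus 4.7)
The plan is to follow the classical indicator function approach due to Groemer and Volland. A naive induction on $m$ fails because partial unions $P_1 \cup \cdots \cup P_k$ need not lie in $\cP^n$, so the valuation axiom cannot be applied recursively. Instead one extends $\oPhi$ from polytopes to the $\Z$-module of polytopal indicator functions, where inclusion-exclusion is a purely combinatorial identity.

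First I would handle the base case $m = 2$: this is immediate from the valuation property together with the observation (noted in the excerpt) that $P_1 \cup P_2 \in \cP^n$ forces $P_1 \cap P_2 \in \cP^n$. Next, I would set $V := \spn_{\Z}\{\mathbf{1}_P : P \in \cP^n\} \subseteq \R^{\R^n}$ and attempt to define a group homomorphism $\tilde\oPhi \colon V \to A$ by $\tilde\oPhi(\mathbf{1}_P) := \oPhi(P)$, extended linearly.

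The crux of the argument, and the main obstacle, is showing that $\tilde\oPhi$ is well-defined, i.e. that $\sum a_i \mathbf{1}_{P_i} = 0$ implies $\sum a_i \oPhi(P_i) = 0$. Here I would proceed by induction on the dimension $n$ and on the number of hyperplanes in a common refinement. Given any finite family $P_1, \ldots, P_N$, one chooses a finite set of hyperplanes $H_1, \ldots, H_r$ whose arrangement contains all affine hulls of the facets of every $P_i$. For each hyperplane $H$ the valuation identity
\begin{equation*}
    \oPhi(P_i) = \oPhi(P_i \cap H^+) + \oPhi(P_i \cap H^-) - \oPhi(P_i \cap H)
\end{equation*}
(valid since the three bodies on the right lie in $\cP^n$) lets one replace each $P_i$ by its pieces cut by $H$, at the cost of a lower-dimensional correction term to which the induction hypothesis applies. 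Iterating over $H_1, \ldots, H_r$ reduces any claimed relation to one supported on a common simplicial cell decomposition, where the verification is trivial because the indicator functions of the cells are linearly independent.

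Once $\tilde\oPhi$ exists, the theorem is immediate: the pointwise identity
\begin{equation*}
    \mathbf{1}_{P_1 \cup \cdots \cup P_m} = \sum_{\emptyset \neq S \subseteq \{1, \ldots, m\}} (-1)^{|S|-1}\, \mathbf{1}_{\bigcap_{i \in S} P_i}
\end{equation*}
holds in $V$ (each $\bigcap_{i \in S} P_i$ lies in $\cP^n \cup \{\emptyset\}$, and the convention $\oPhi(\emptyset) = 0$ absorbs empty intersections). Applying $\tilde\oPhi$ and using $\tilde\oPhi(\mathbf{1}_{P_1 \cup \cdots \cup P_m}) = \oPhi(P_1 \cup \cdots \cup P_m)$, which is legitimate because $P_1 \cup \cdots \cup P_m \in \cP^n$ by hypothesis, yields the desired formula. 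The payoff of this route is that one never needs intermediate unions to be polytopes, only the full union and all intersections, which is exactly what the hypotheses guarantee.
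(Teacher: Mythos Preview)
The paper does not actually prove this theorem: immediately before the statement it says ``With the exception of the next theorem due to Volland \cite{Volland1957} (see also \cite{KlainRota1997}) we give proofs for the sake of completeness,'' and then simply states the result without proof. So there is no paper-proof to compare against; the paper defers entirely to the cited references.

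Your proposal is precisely the classical Volland/Groemer extension argument that those references contain: extend $\oPhi$ to the $\Z$-module generated by indicator functions of polytopes, prove well-definedness by cutting with hyperplanes and inducting on dimension, and then read off inclusion--exclusion from the trivial pointwise identity $\mathbf{1}_{P_1\cup\cdots\cup P_m}=\sum_{\emptyset\neq S}(-1)^{|S|-1}\mathbf{1}_{\bigcap_{i\in S}P_i}$. The sketch is correct in outline; the one place that deserves a bit more care in a full write-up is the well-definedness step. After cutting by all hyperplanes $H_1,\ldots,H_r$, each $P_i$ decomposes as a sum (in the indicator-function sense) of full-dimensional cells plus lower-dimensional pieces, and you need both that the full-dimensional open cells have linearly independent indicators \emph{and} that the induction on dimension handles the boundary contributions uniformly (i.e.\ that $\oPhi$ restricted to polytopes inside each $H_j$ is again a valuation on a copy of $\cP^{n-1}$). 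These are routine but should be stated explicitly. With that fleshed out, your argument is exactly what the cited literature does.
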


\begin{lemma}\th\label{valuation determined simplices Pn}
	Let $A$ be an abelian group and $\oPhi \colon \cP^n \to A$ a valuation.
	Then $\oPhi$ is determined by its values on $n$-simplices.
\end{lemma}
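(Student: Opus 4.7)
The plan is to prove the lemma in two stages. First, by downward induction on $k$ from $n$ to $0$, show that the restriction of $\oPhi$ to $k$-simplices is determined by its values on $n$-simplices. Second, pass from simplices to arbitrary polytopes via Theorem \ref{incl excl Pn} together with any triangulation.

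For the inductive step of the first stage, fix a $k$-simplex $S = \conv\{u_1, \ldots, u_{k+1}\}$ with $k < n$. The strategy is to realize $S$ as the intersection of two $(k+1)$-simplices $T_1, T_2$ whose union is itself a single $(k+1)$-simplex, and then to solve for $\oPhi(S)$ using the valuation identity. Since $k < n$, the subspace $\lin\{u_2-u_1, \ldots, u_{k+1}-u_1\} \subseteq \R^n$ is proper, so one can choose a vector $d \in \R^n$ outside it; setting $v_0 := u_1 + d$ and $v_1 := u_1 - d$, a direct check shows that $T_1 := \conv\{v_0, u_1, \ldots, u_{k+1}\}$ and $T_2 := \conv\{v_1, u_1, \ldots, u_{k+1}\}$ are $(k+1)$-simplices, that $T_1 \cap T_2 = S$, and, using $u_1 = (v_0+v_1)/2$, that $T_1 \cup T_2 = \conv\{v_0, v_1, u_2, \ldots, u_{k+1}\}$ is again a single $(k+1)$-simplex. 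The valuation property applied to $(T_1, T_2)$ then gives
\begin{equation*}
	\oPhi(S) = \oPhi(T_1) + \oPhi(T_2) - \oPhi(T_1 \cup T_2),
\end{equation*}
and the three $(k+1)$-simplex values on the right are determined by $\oPhi$ on $n$-simplices by the inductive hypothesis.

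For the second stage, given $P \in \cP^n$ I would fix any triangulation $P = P_1 \cup \cdots \cup P_m$ into simplices whose intersections are common faces. Applying Theorem \ref{incl excl Pn} yields
\begin{equation*}
	\oPhi(P) = \sum_{\emptyset \neq I \subseteq \{1, \ldots, m\}} (-1)^{|I|-1} \oPhi\!\left( \bigcap_{i \in I} P_i \right),
\end{equation*}
and each intersection on the right is a common face of the listed simplices, hence itself a simplex of dimension at most $n$, whose $\oPhi$-value is determined by the first stage.

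The main obstacle is the geometric construction in the inductive step: one has to split an auxiliary $(k+1)$-simplex into exactly two $(k+1)$-simplices meeting along the prescribed $S$, and such a splitting is only available when the ambient dimension exceeds $k$, i.e.\ precisely when $k < n$. For $k = n$ the construction fails, but this is precisely the base case of the induction and is thus harmless.
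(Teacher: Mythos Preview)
Your proof is correct and follows essentially the same two-stage approach as the paper's: the same geometric construction (writing a $k$-simplex as the intersection of two $(k+1)$-simplices whose union is again a $(k+1)$-simplex) for the downward induction, followed by triangulation plus Theorem~\ref{incl excl Pn} to pass to arbitrary polytopes. The only cosmetic differences are that the paper phrases the argument as ``if $\oPhi$ vanishes on $n$-simplices then $\oPhi=0$'' and handles the second stage by induction on dimension rather than by invoking a face-to-face triangulation, but the content is the same.
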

\begin{proof}
	Assume that $\oPhi$ vanishes on $n$-simplices.
	Let $k \in \{0,\ldots,n-1\}$.
	Every $k$-dimensional simplex $T$ can be written as the intersection of two $(k+1)$-dimensional simplices $T_1, T_2$.
	We can do this in such a way that $T_1 \cup T_2$ is also a $(k+1)$-dimensional simplex.
	Using induction from $n$ to $0$ shows that $\oPhi$ vanishes on all simplices.
	Now, using \th\ref{incl excl Pn} and induction from $0$ to $n$ finishes the proof.
\end{proof}

\begin{lemma}\th\label{incl excl Pno}
	Let $A$ be an abelian group and $\oPhi \colon \cP^n_o \to A$ a valuation.
	Then $\oPhi$ satisfies the inclusion exclusion principle, i.e.
	\begin{equation*}
		\oPhi(P_1 \cup \ldots \cup P_m) =
		\sum_{\emptyset \neq S \subseteq \{1,\ldots,m\}} (-1)^{|S|-1} \oPhi \left( \bigcap_{i \in S} P_i \right)
	\end{equation*}
	for all $m \in \N$ and $P_1, \ldots, P_m \in \cP^n_o$ with $P_1 \cup \ldots \cup P_m \in \cP^n_o$.
\end{lemma}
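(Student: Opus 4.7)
The plan is to mirror the Volland--Groemer strategy underlying \th\ref{incl excl Pn}: extend $\oPhi$ from $\cP^n_o$ to a valuation on the lattice generated by $\cP^n_o$ under finite unions and intersections, and then deduce inclusion-exclusion by applying this extension to the pointwise identity $\mathbf{1}_{\bigcup P_i}=\sum_{\emptyset\neq S}(-1)^{|S|-1}\mathbf{1}_{\bigcap_{i\in S}P_i}$. A key observation that makes this work in the $\cP^n_o$ setting is that every intersection $\bigcap_{i\in S}P_i$ on the right-hand side lies in $\cP^n_o$: it is a convex polytope and, since each $P_i$ contains the origin, so does the intersection. Hence every term that appears on the right is genuinely in the domain of $\oPhi$.

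The base cases $m=1,2$ are immediate: $m=1$ is tautological, and $m=2$ is the defining valuation identity, which is legitimately applicable in $\cP^n_o$ because $P_1\cap P_2\in\cP^n_o$ whenever $P_1,P_2\in\cP^n_o$. For $m\geq 3$ the obstacle is that partial unions $P_1\cup\cdots\cup P_k$ need not be convex, so a direct ``peel-off'' induction on $m$ fails. I would therefore pass to a polytopal subdivision of $P:=\bigcup_i P_i$ that refines all the $P_i$. The subtlety specific to $\cP^n_o$ is that the cells of the standard hyperplane-arrangement refinement need not contain the origin; to remedy this, I would additionally star the arrangement from $o$, replacing each cell $C$ by the simplices $\conv(\{o\}\cup F)$ ranging over the facets $F$ of $C$ not containing $o$. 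The resulting subdivision has all cells in $\cP^n_o$, each $P_i$ is still a union of cells, and any two cells intersect in a common face (which also lies in $\cP^n_o$).

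The main obstacle will be the extension step: verifying that the assignment $\sum n_j\mathbf{1}_{Q_j}\mapsto\sum n_j\oPhi(Q_j)$ is well defined, i.e., that any $\Z$-linear relation among indicators of $\cP^n_o$-polytopes forces the corresponding relation in $A$. This is the $\cP^n_o$-analogue of Groemer's integral theorem. I expect its proof to reduce to repeated applications of the valuation property for pairs of $\cP^n_o$-cells whose union and intersection both lie in $\cP^n_o$ --- a condition that holds automatically for the cells of the starred refinement described above. Once this extension is in hand, evaluating it on the tautological inclusion-exclusion identity for indicator functions yields the desired formula in $A$.
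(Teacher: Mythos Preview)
Your strategy---rebuilding a Groemer-type extension theorem inside $\cP^n_o$---could in principle be completed, but the paper bypasses all of this with a two-line reduction. Extend $\oPhi$ to $\cP^n$ by setting $\oPhi(P):=\oPhi(P_o)$, where $P_o:=\conv(\{o\}\cup P)$. Whenever $P\cup Q$ is convex one has $(P\cup Q)_o=P_o\cup Q_o$ and $(P\cap Q)_o=P_o\cap Q_o$ (a short ray-by-ray argument using convexity of $P\cup Q$), so the extended $\oPhi$ is a valuation on $\cP^n$. Now \th\ref{incl excl Pn} applies directly, and since every intersection $\bigcap_{i\in S}P_i$ already lies in $\cP^n_o$---exactly the observation you made---the resulting identity is the desired one for the original $\oPhi$. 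This trick exports the hard combinatorics (Volland's theorem) from $\cP^n$ rather than redoing it.

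Your own construction also has a concrete flaw: for a cell $C$ of the hyperplane arrangement with $o\notin C$, the pyramids $\conv(\{o\}\cup F)$ over the facets $F$ of $C$ do \emph{not} tile $C$; they protrude beyond $C$ and overlap neighbouring cells. A valid $\cP^n_o$-refinement does exist (for instance, take a common refinement of the boundary complexes of the $P_i$ and cone each piece from $o$), but carrying this through and then establishing well-definedness of the induced extension is essentially reproving Volland's theorem in a restricted setting---work that the paper's extension trick lets you avoid entirely.
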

\begin{proof}
	Extend $\oPhi$ to $\cP^n$ by
	\begin{equation*}
		\oPhi P = \oPhi (P_o)
	\end{equation*}
	for all $P \in \cP^n$, where $P_o := \conv(\{0\} \cup P)$.
	It is easy to see that this defines a valuation on $\cP^n$.
	The assertion now follows from \th\ref{incl excl Pn}.
\end{proof}

\begin{lemma}\th\label{valuation determined simplices}
	Let $A$ be an abelian group and $\oPhi \colon \cP^n_o \to A$ a valuation.
	Then $\oPhi$ is determined by its values on $n$-simplices with one vertex at the origin
	and its value on $\{o\}$.
\end{lemma}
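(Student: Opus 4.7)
The plan is to show that if $\oPhi\colon\cP^n_o\to A$ vanishes on every $n$-simplex with $o$ as a vertex and on $\{o\}$, then $\oPhi\equiv 0$; applying this to the difference of two valuations then yields the claimed determination. I will proceed in two steps: first a downward induction on simplex dimension extending the vanishing to all simplices with $o$-vertex, then a cone-triangulation argument via \th\ref{incl excl Pno} extending it to all of $\cP^n_o$.

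For the inductive step, given a $k$-simplex $T=\conv\{o,v_1,\ldots,v_k\}$ with $1\le k<n$, I choose $w\in\R^n\setminus\spn\{v_1,\ldots,v_k\}$ (possible because $k<n$) and form
\begin{align*}
	T_1 &= \conv\{o,v_1,\ldots,v_{k-1},v_k-w,v_k\},\\
	T_2 &= \conv\{o,v_1,\ldots,v_{k-1},v_k,v_k+w\},\\
	T_3 &= \conv\{o,v_1,\ldots,v_{k-1},v_k-w,v_k+w\}.
\end{align*}
Linear independence of $v_1,\ldots,v_k,w$ immediately yields that $T_1,T_2,T_3$ are $(k+1)$-simplices each having $o$ as a vertex; separating coefficients in convex combinations shows $T_1\cap T_2=T$ and $T_1\cup T_2=T_3$. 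The valuation identity $\oPhi T_1+\oPhi T_2=\oPhi T_3+\oPhi T$ together with the inductive hypothesis forces $\oPhi T=0$. The base cases $k=n$ and $k=0$ are the hypotheses of the lemma.

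To finish, I cone-triangulate a general $P\in\cP^n_o$ from $o$ by fixing any simplicial triangulation of the union of the facets of $P$ that do not contain $o$ and coning each top-dimensional face from $o$. The resulting $n$-simplices $T_i$ all have $o$ as a vertex, and any multiple intersection $\bigcap_{i\in J}T_i$ is the cone from $o$ over $\bigcap_{i\in J}\sigma_i$ and so is itself a simplex with $o$-vertex (possibly $\{o\}$). Applying \th\ref{incl excl Pno} expresses $\oPhi P$ as a signed sum of such values, each vanishing by the first step, so $\oPhi P=0$.

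The main obstacle is the choice of $T_1,T_2,T_3$ in the inductive step. The obvious symmetric construction $T_1=\conv(T\cup\{w\})$, $T_2=\conv(T\cup\{-w\})$ does give $T_1\cap T_2=T$ and a convex union $T_3=\conv\{v_1,\ldots,v_k,w,-w\}$, but since $o=\tfrac12 w+\tfrac12(-w)$ lies on the relative interior of the edge $[w,-w]$ of this $T_3$, it is a simplex without $o$ as a vertex and the inductive hypothesis fails to apply. Localising the perturbation at the last base vertex via $v_k\pm w$ fixes this: the contribution from $w$ in any convex combination now appears in the decoupled form $(c-b)w$ with $b,c\ge 0$ independent, and linear independence of $v_1,\ldots,v_k,w$ forces $b=c=0$, so the origin remains extreme in $T_3$ and the induction closes.
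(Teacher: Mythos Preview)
Your proof is correct and follows the same two-step strategy as the paper: a downward induction showing $\oPhi$ vanishes on all simplices with $o$ as a vertex, followed by a triangulation-plus-inclusion-exclusion argument for general $P\in\cP^n_o$. Your treatment of the inductive step is in fact more explicit than the paper's (which merely points to the analogous argument for $\cP^n$), and your discussion of why the symmetric choice $v_k\mapsto\pm w$ fails while $v_k\mapsto v_k\pm w$ succeeds is a nice touch.

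The only substantive difference is organizational, in the second step. The paper dissects a $k$-dimensional $P\in\cP^n_o$ into $k$-simplices with $o$-vertex and then runs an induction on $k$, since the pairwise intersections appearing in \th\ref{incl excl Pno} are lower-dimensional polytopes that need not be simplices. You instead cone over a simplicial triangulation of the facets not containing $o$, which forces every intersection $\bigcap_{i\in J}T_i$ to be a simplex with $o$-vertex, so a single application of \th\ref{incl excl Pno} suffices and no dimension induction is needed. One small imprecision: you write ``the resulting $n$-simplices $T_i$'', but for $\dim P<n$ these are $(\dim P)$-simplices; this does not affect the argument, since step one already covers simplices of all dimensions.
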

\begin{proof}
	Assume that $\oPhi$ vanishes on $n$-simplices with one vertex at the origin.
	Similar to the proof of \th\ref{valuation determined simplices Pn}, it follows that $\oPhi$ vanishes on all simplices with one vertex at the origin.
	Note that a $k$-dimensional polytope which contains the origin can be dissected into $k$-simplices with one vertex at the origin for all $k \in \{0,\ldots,n\}$.
	Now, using \th\ref{incl excl Pno} and induction from $0$ to $n$ finishes the proof.
\end{proof}

\begin{lemma}\th\label{simple determined cPno}
	Let $A$ be an abelian group and $\oPhi \colon \cP^n \to A$ a simple valuation.
	Then $\oPhi$ is determined by its values on $\cP^n_o$.
\end{lemma}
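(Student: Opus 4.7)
The plan is to invoke \th\ref{valuation determined simplices Pn}, which reduces the claim to showing that $\oPhi T$ is determined by $\oPhi|_{\cP^n_o}$ for every $n$-simplex $T \in \cP^n$. The case $o \in T$ is immediate; I focus on $o \not\in T$. Here the strategy is to enlarge $T$ to $T' := \conv(\{o\} \cup T) \in \cP^n_o$, decompose $T'$ into $T$ together with auxiliary polytopes from $\cP^n_o$ whose mutual overlaps are all of dimension $< n$, and then solve for $\oPhi T$ via inclusion--exclusion, using simplicity to kill the overlap terms.

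More precisely, let $F_1, \ldots, F_k$ be the facets of $T$ \emph{visible from $o$}, i.e.\ those for which $o$ lies in the open halfspace bounded by $\aff F_i$ disjoint from $\interior T$, and set $R_i := \conv(\{o\} \cup F_i) \in \cP^n_o$. Tracing the segment $[o,q]$ for each $q \in T$ up to its first intersection with $\partial T$ yields the decomposition $T' = T \cup R_1 \cup \cdots \cup R_k$. Applying \th\ref{incl excl Pn} then expresses $\oPhi T'$ as a signed sum of $\oPhi$ evaluated on the multiple intersections of $T, R_1, \ldots, R_k$. Visibility of $F_i$ puts $T$ and $R_i$ in opposite closed halfspaces meeting along $\aff F_i$, whence $T \cap R_i = F_i$ has dimension $n-1$. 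For $i \neq j$ one checks that $\dim(R_i \cap R_j) \leq n-1$: an $n$-dimensional interior point of $R_i \cap R_j$ would force a ray from $o$ to enter $T$ at the relative interior of $F_i$ and exit at the relative interior of $F_j$, but the exit facet of any ray from $o$ through $T$ has $o$ on the same side as $\interior T$ and hence cannot be visible. All higher-order intersections are bounded similarly, so simplicity of $\oPhi$ kills every inclusion--exclusion contribution except the singletons. What remains is
\[
    \oPhi T = \oPhi T' - \sum_{i=1}^k \oPhi R_i ,
\]
which expresses $\oPhi T$ entirely through $\oPhi|_{\cP^n_o}$.

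The only subtle point is the dimensional bound on $R_i \cap R_j$, which rests on the elementary visibility fact that a ray from $o$ through $T$ enters at a visible facet and exits at a non-visible one. With this in hand the remainder is purely formal bookkeeping, so I do not anticipate a serious obstacle.
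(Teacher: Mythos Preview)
Your proposal is correct and follows essentially the same approach as the paper: take $T' = \conv(\{o\} \cup T)$, decompose it as $T$ together with the cones $R_i = \conv(\{o\} \cup F_i)$ over the facets visible from the origin, and use inclusion--exclusion plus simplicity to solve for $\oPhi T$. The paper's ``facets facing towards the origin'' (those with $\h{P}{v} < 0$) are exactly your visible facets, and its $(F_i)_o$ are your $R_i$. The one minor difference is that the paper carries out the decomposition directly for an arbitrary $P \in \cP^n$, so your preliminary reduction to $n$-simplices via \th\ref{valuation determined simplices Pn} is unnecessary; otherwise the arguments coincide.
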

\begin{proof}
	Let $P \in \cP^n$.
	Denote by $F_1,\ldots,F_m$ the facets of $P$ which face towards the origin.
	Here we say that a facet $F$ is facing towards the origin, if $\h{P}{v} < 0$,
	where $v$ is the corresponding outer unit normal.
	Clearly
	\begin{equation*}
		P_o = P \cup (F_1)_o \cup \ldots \cup (F_m)_o ,
	\end{equation*}
	where $P_o := \conv(\{0\} \cup P)$.
	Note that $P_o, (F_1)_o, \ldots, (F_m)_o \in \cP^n_o$.
	Furthermore the intersection of two convex bodies of the right hand side is lower dimensional.
	Since $\oPhi$ is simple, \th\ref{incl excl Pn} implies
	\begin{equation*}
		\oPhi P_o = \oPhi P + \oPhi (F_1)_o + \ldots + \oPhi (F_m)_o
	\end{equation*}
	or equivalently
	\begin{equation*}
		\oPhi P = \oPhi P_o - \oPhi (F_1)_o - \ldots - \oPhi (F_m)_o .
	\end{equation*}
\end{proof}

	\section{$\sln$-Contravariance}
		\label{se: prelimSLn}
		%%%%%%%%%%%%%%%%%%%%%%%%%%%%%%%%%%%%%%%%%%%%%%%%%%%%%%%%%%%%%%%%%%%%%%%%%%%%%%%%
Let $\cQ^n$ denote $\cK^n$,$\cP^n$,$\cK^n_o$ or $\cP^n_o$.
A map $\oPhi \colon \cQ^n \to \cK^n$ is called $\sln$-contravariant, if it satisfies
\begin{equation*}
	\oPhi( \phi K ) = \phi^{-t} \oPhi K
\end{equation*}
for all $K \in \cQ^n$ and $\phi \in \sln$.
A map $\oPhi \colon \cQ^n \to C(\R^n)$ is called $\sln$-contravariant, if it satisfies
\begin{equation*}
	\oPhi( \phi K ) = \oPhi(K) \circ \phi^{-1}
\end{equation*}
for all $K \in \cQ^n$ and $\phi \in \sln$.
Since
\begin{equation*}
	\h{\phi^{-t} \oPhi K}{u} = \h{\oPhi K}{\phi^{-1} u}
\end{equation*}
holds for all $K \in \cQ^n$, $u \in \R^n$ and $\phi \in \sln$,
we see that a map $\oPhi \colon \cQ^n \to \cK^n$ is $\sln$-contravariant,
if and only if $K \mapsto \h{\oPhi K}{.}$ is $\sln$-contravariant.
Similarly a map $\oPhi \colon \cQ^n \to \cK^n_o$ is $\sln$-contravariant,
if and only if $K \mapsto \h{\oPhi K}{.}^p$ is $\sln$-contravariant.

%%%%%%%%%%%%%%%%%%%%%%%%%%%%%%%%%%%%%%%%%%%%%%%%%%%%%%%%%%%%%%%%%%%%%%%%%%%%%%%%
We will now define some $\sln$-contravariant $\Lp$-Minkowski valuations.
The $\sln$-contravariance and the valuation property will be proven below.
The asymmetric $\Lp$-projection body operator $\oPipp \colon \cP^n_o \to \cK^n_o$ is defined by
\begin{equation*}
	\h{ \oPipp P}{.}^p = \sum_{\substack{v \in \cN(P) \\ 0 \not\in F(P,v)}} \vol_{n-1}(F(P,v)) \h{P}{v}^{1-p} \langle v, . \rangle_+^p
\end{equation*}
for all $P \in \cP^n_o$.
Here $\cN(P)$ denotes the set of all outer unit normals of facets of $P$ and $F(P,v)$ denotes the facet corresponding to $v \in \cN(P)$.
More generally we define $F(P,v) = P \cap \{x \in \R^n : \langle x ,v \rangle = \h{P}{v}\}$.
Furthermore $\langle v, . \rangle_+$ denotes the positive part of $\langle v, . \rangle$, i.e. $\max\{\langle v, . \rangle, 0\}$.
Note that $\langle v, . \rangle_+$ is the support function of the straight line segment $[o, v]$.
The map $\oPippg \colon \cP^n \to \cK^n_o$ is defined by
\begin{equation*}
	\h{ \oPippg P}{.}^p = \sum_{\substack{v \in \cN(P) \\ \h{P}{v} > 0}} \vol_{n-1}(F(P,v)) \h{P}{v}^{1-p} \langle v, . \rangle_+^p
\end{equation*}
for all $P \in \cP^n$.
Note that $\oPippg$ is an extension of $\oPipp$ to $\cP^n$.
The map $\oPippl \colon \cP^n \to \cK^n_o$ is defined by
\begin{equation*}
	\h{ \oPippl P}{.}^p = \sum_{\substack{v \in \cN(P) \\ \h{P}{v} < 0}} \vol_{n-1}(F(P,v)) |\h{P}{v}|^{1-p} \langle v, . \rangle_+^p
\end{equation*}
for all $P \in \cP^n$.
Note that $\oPippl$ vanishes on $\cP^n_o$.
Similarly we define $\oPipm$, $\oPipmg$ and $\oPipml$ with $\langle v, . \rangle_+^p$ replaced by $\langle v, . \rangle_-^p$,
where $ \langle v, . \rangle_- = \max\{-\langle v, . \rangle,0\}$.

The map $\oDeltapp \colon \cP^n \to \Cp$ is defined by
\begin{align*}
	   \oDeltapp P
	&= \h{\oPippg P}{.}^p - \h{\oPipml P}{.}^p \\
	&= \sum_{\substack{v \in \cN(P) \\ \h{P}{v} \neq 0}} \vol_{n-1}(F(P,v)) \sgn \h{P}{v} |\h{P}{v}|^{1-p} \langle v, . \rangle_{\sgn \h{P}{v}}^p
\end{align*}
for all $P \in \cP^n$.
Note that $\oDeltapp$ is a simple extension of $P \mapsto \h{\oPipp P}{.}^p$ to $\cP^n$.
Similarly we define $\oDeltapm$ by
\begin{equation*}
	   \oDeltapm P = \h{\oPipmg P}{.}^p - \h{\oPippl P}{.}^p
\end{equation*}
for all $P \in \cP^n$.

\begin{lemma}\th\label{operator val}
	Let $V$ be a real vector space and $f \colon \R \times S^{n-1} \to V$ a function.
	Define $\oPhi \colon \cP^n \to V$ by
	\begin{equation*}
		\oPhi P = \sum_{v \in \cN(P)} \vol_{n-1}(F(P,v)) f(\h{P}{v}, v)
	\end{equation*}
	for all $P \in \cP^n$.
	Then $\oPhi$ is a valuation.
\end{lemma}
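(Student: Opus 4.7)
The plan is to reduce the valuation identity to a direction-by-direction identity. For $K \in \cP^n$ and $v \in S^{n-1}$, set
\[
	g(K,v) := \vol_{n-1}(F(K,v)) \, f(\h{K}{v}, v),
\]
with the convention that $\vol_{n-1}(F(K,v)) = 0$ whenever $\dim F(K,v) < n-1$. Since $F(K,v)$ always lies in a hyperplane, $g(K,v)$ vanishes unless $v \in \cN(K)$, so $\oPhi K = \sum_{v \in S^{n-1}} g(K,v)$ is in effect a finite sum. For $P, Q \in \cP^n$ with $P \cup Q \in \cP^n$, the valuation identity for $\oPhi$ will therefore follow once I establish, for each fixed $v \in S^{n-1}$,
\[
	g(P \cup Q, v) + g(P \cap Q, v) = g(P, v) + g(Q, v). \qquad (\ast)
\]
The case $P \cap Q = \emptyset$ is trivial, since convexity of $P \cup Q$ then forces one of $P, Q$ to be empty; I may assume $P \cap Q \in \cP^n$.

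I would then split into two cases. If $\h{P}{v} = \h{Q}{v}$, a short connectedness argument applied to a segment joining a point of $F(P,v)$ to a point of $F(Q,v)$ (contained in $P \cup Q$ by convexity, and attaining the common maximum value of $\langle \cdot, v \rangle$ throughout) shows $F(P,v) \cap F(Q,v) \neq \emptyset$; this yields $\h{P \cup Q}{v} = \h{P \cap Q}{v} = \h{P}{v}$ together with $F(P \cup Q, v) = F(P,v) \cup F(Q,v)$ and $F(P \cap Q, v) = F(P,v) \cap F(Q,v)$. All four $f$-values then coincide, so $(\ast)$ collapses to the additivity of $\vol_{n-1}$ on subsets of a common hyperplane. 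If instead $\h{P}{v} \neq \h{Q}{v}$, say $\h{P}{v} > \h{Q}{v}$, then $F(P \cup Q, v) = F(P,v)$ and $\h{P \cup Q}{v} = \h{P}{v}$ are immediate; the remaining ingredient is the geometric claim that $F(Q,v) \subseteq P$, from which $F(P \cap Q, v) = F(Q,v)$ and $\h{P \cap Q}{v} = \h{Q}{v}$ follow, and $(\ast)$ again holds summand by summand.

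The genuinely nontrivial step, and the expected main obstacle, is precisely the inclusion $F(Q,v) \subseteq P$: this is the only place where convexity of $P \cup Q$ is substantively used. I would prove it by contradiction. Suppose some $y \in F(Q,v)$ fails to lie in $P$, and pick any $x \in P$ with $\langle x, v \rangle > \h{Q}{v}$, which is possible since $\h{P}{v} > \h{Q}{v}$. The segment $[x,y]$ lies in $P \cup Q$ by convexity of the union; past its last point belonging to $P$ it must enter $Q$, and any such transition point $(1-t)x + t y$ with $t < 1$ then satisfies $(1-t)\langle x, v \rangle + t \h{Q}{v} \leq \h{Q}{v}$, which forces $\langle x, v \rangle \leq \h{Q}{v}$, contradicting the choice of $x$. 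Once $(\ast)$ is in hand, summing it over the finitely many relevant directions yields the valuation identity for $\oPhi$.
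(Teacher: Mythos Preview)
Your proposal is correct and follows essentially the same approach as the paper: split the directions $v$ according to whether $\h{P}{v}$ is less than, equal to, or greater than $\h{Q}{v}$, identify the faces of $P\cup Q$ and $P\cap Q$ in each case, and reduce the equal case to the additivity of $(n-1)$-volume. The only difference is the level of detail---the paper merely asserts the face identities and that $P\mapsto\vol_{n-1}(F(P,v))$ is a valuation, whereas you actually supply the connectedness argument in the equal case and the inclusion $F(Q,v)\subseteq P$ in the unequal case.
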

\begin{proof}
	We need to show that
	\begin{equation*}
		\oPhi (P \cup Q) + \oPhi (P \cap Q) = \oPhi P + \oPhi Q
	\end{equation*}
	for all $P,Q \in \cP^n$ with $P \cup Q \in \cP^n$.
	We distinguish three sets of normal vectors:
	\begin{align*}
		I_1 &:= \{ v \in S^{n-1} : \h{P}{v} < \h{Q}{v} \} ,\\
		I_2 &:= \{ v \in S^{n-1} : \h{P}{v} = \h{Q}{v} \} ,\\
		I_3 &:= \{ v \in S^{n-1} : \h{P}{v} > \h{Q}{v} \} .
	\end{align*}
	For $v \in I_1$ we have $F(P \cup Q,v) = F(Q,v)$, $\h{P \cup Q}{v} = \h{Q}{v}$, $F(P \cap Q,v) = F(P,v)$ and $\h{P \cap Q}{v} = \h{P}{v}$.
	Analogous for $I_3$.
	It follows that the above equation is equivalent to
	\begin{align*}
		&  \sum_{\substack{ v \in \cN(P \cup Q) \\ v \in I_2 }} \vol_{n-1}(F(P \cup Q,v)) f(\h{P \cup Q}{v}, v) \\
		+& \sum_{\substack{ v \in \cN(P \cap Q) \\ v \in I_2 }} \vol_{n-1}(F(P \cap Q,v)) f(\h{P \cap Q}{v}, v) \\
		=& \sum_{\substack{ v \in \cN(P) \\ v \in I_2 }} \vol_{n-1}(F(P,v)) f(\h{P}{v}, v) \\
		+& \sum_{\substack{ v \in \cN(Q) \\ v \in I_2 }} \vol_{n-1}(F(Q,v)) f(\h{Q}{v}, v) .
	\end{align*}
	Note that $f(\h{P \cup Q}{v}, v) = f(\h{P \cap Q}{v}, v) = f(\h{P}{v}, v) = f(\h{Q}{v}, v)$ for $v \in I_2$.
	Furthermore $\cN(P \cup Q) \cup \cN(P \cap Q) = \cN(P) \cup \cN(Q)$.
	Since
	\begin{equation*}
		P \mapsto \vol_{n-1}(F(P,v)) \ , P \in \cP^n
	\end{equation*}
	is a valuation for fixed $v \in S^{n-1}$ (as is easy to verify), this implies the desired result.
\end{proof}

\begin{lemma}\th\label{operator sln contra val}
	The map $\oPippg \colon \cP^n \to \cK^n_o$ is an $\sln$-contravariant $\Lp$-Minkowski valuation.
\end{lemma}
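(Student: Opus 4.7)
The plan is to reduce both requirements --- the $\Lp$-Minkowski valuation property and the $\sln$-contravariance --- to statements about $P \mapsto \h{\oPippg P}{.}^p$, which by the discussion preceding the lemma is equivalent to working with $\oPippg$ itself.

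For the valuation property I would apply \th\ref{operator val} directly. Taking $V = C(\R^n)$ and
\begin{equation*}
	f(t, v) = \begin{cases} t^{1-p} \langle v, . \rangle_+^p & \text{if } t > 0, \\ 0 & \text{otherwise,} \end{cases}
\end{equation*}
the defining expression for $\h{\oPippg P}{.}^p$ coincides with $\sum_{v \in \cN(P)} \vol_{n-1}(F(P,v)) f(\h{P}{v}, v)$, since the summands with $\h{P}{v} \leq 0$ drop out automatically. The lemma then yields the valuation property in $C(\R^n)$. To upgrade this to a valuation into $(\cK^n_o, \pp)$, I would observe that each nonvanishing summand equals $\h{K_v}{.}^p$ for a suitable positive multiple $K_v$ of the segment $[o,v]$, so the sum is the $p$-th power of the support function of the $\Lp$-Minkowski sum of the $K_v$'s, which lies in $\cK^n_o$ by closure of the latter under $\pp$.

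For $\sln$-contravariance I would compute directly. Given $\phi \in \sln$, the facets of $\phi P$ are the images $\phi F(P, v)$ with outer unit normal $w = \phi^{-t} v / \|\phi^{-t} v\|$, and the three ingredients entering the definition of $\oPippg$ transform as
\begin{align*}
	\vol_{n-1}(\phi F(P,v)) &= \|\phi^{-t} v\| \, \vol_{n-1}(F(P,v)), \\
	\h{\phi P}{w} &= \h{P}{v}/\|\phi^{-t} v\|, \\
	\langle w, u \rangle_+ &= \langle v, \phi^{-1} u \rangle_+ / \|\phi^{-t} v\|,
\end{align*}
the first identity using $|\det \phi| = 1$ together with a standard one-parameter slab argument in the direction $v$. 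Substituting into the definition of $\h{\oPippg \phi P}{u}^p$, the three powers of $\|\phi^{-t} v\|$ combine as $\|\phi^{-t} v\|^{1+(p-1)-p} = 1$, the positivity condition $\h{\phi P}{w} > 0$ translates to $\h{P}{v} > 0$, and reindexing the sum over $v$ produces $\h{\oPippg P}{\phi^{-1} u}^p$, which is the desired contravariance.

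I do not anticipate a serious obstacle. The valuation property is essentially immediate from \th\ref{operator val}, and the only nontrivial input for contravariance is the facet volume-scaling formula; the remainder is bookkeeping, with the exponent $1-p$ in the definition of $\oPippg$ being precisely what arranges the full cancellation of the $\|\phi^{-t} v\|$ factors.
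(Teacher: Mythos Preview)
Your proposal is correct and follows essentially the same approach as the paper: the valuation property is obtained from \th\ref{operator val} with the identical choice of $f$, and the $\sln$-contravariance is verified via the same three transformation formulas for facet volume, support value, and inner product, with the same cancellation of the $\|\phi^{-t}v\|$ factors. Your additional remark on why the image lands in $\cK^n_o$ is a welcome detail that the paper leaves implicit.
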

\begin{proof}
	The valuation property follows directly from the definition and \th\ref{operator val} with $f \colon \R \times S^{n-1} \to \Cp$ defined by
	\begin{equation*}
		f(t,v) := \left\{
		\begin{matrix}
				0 & \text{if } t \leq 0 \\
				t^{1-p} \langle v, . \rangle_+^p & \text{if } t > 0\\
		\end{matrix}
		\right.
	\end{equation*}
	for all $t \in \R$ and $v \in S^{n-1}$.
	To show the $\sln$-contravariance let $\phi \in \sln$.
	Note that
	\begin{equation*}
		v \in \cN(P) \Longleftrightarrow \tilde v \in \cN(\phi P)
	\end{equation*}
	with $\tilde v := {\|\phi^{-t} v\|}^{-1} \phi^{-t} v$ and that
	\begin{equation*}
		\vol_{n-1}(F(\phi P, \tilde v)) = {\|\phi^{-t} v\|} \vol_{n-1}(F(P,v)).
	\end{equation*}
	Furthermore
	\begin{align*}
		   f(\h{\phi P}{\tilde v}, \tilde v)
		&= f(\h{P}{\phi^{t} \tilde v}, \tilde v) \\
		&= f({\|\phi^{-t} v\|}^{-1} \h{P}{v}, {\|\phi^{-t} v\|}^{-1} \phi^{-t} v) \\
		&= {\|\phi^{-t} v\|}^{-1} f(\h{P}{v}, \phi^{-t} v) .
	\end{align*}
	Thus, it follows that
	\begin{align*}
		   \vol_{n-1}(F(\phi P, \tilde v)) f(\h{\phi P}{\tilde v}, \tilde v)
		&= \vol_{n-1}(F(P,v)) f(\h{P}{v}, \phi^{-t} v) \\
		&= \vol_{n-1}(F(P,v)) f(\h{P}{v}, v) \circ  \phi^{-1} .
	\end{align*}
	This implies the desired result.
\end{proof}

\begin{remark}\th\label{operator sln contra val rem}
	Analogous to \th\ref{operator sln contra val} we see that $\oPipmg$, $\oPippl$ and $\oPipml$
	are $\sln$-contravariant $\Lp$-Minkowski valuations.
	Furthermore this implies that $\oPipp$ and $\oPipm$ are $\sln$-contravariant $\Lp$-Minkowski valuations.
	A similar result holds for $\oDeltapp$ and $\oDeltapm$.
\end{remark}

\begin{example}\th\label{eval T^n}
	For further reference, we evaluate our valuations at some special polytopes.
	We start with $T^n$.
	The only facet in $T^n$ which does not contain the origin is $F := \conv\{ e_1,\ldots, e_n\}$.
	The outer unit normal at $F$ is $v := \frac 1 {\sqrt{n}} (e_1 + \ldots + e_n)$.
	Note that $\vol_{n-1} F = \frac {\sqrt{n}} {(n-1)!}$ and that $\h{ T^n}{ v} = \frac 1 {\sqrt{n}}$.
	Thus, we have
	\begin{align*}
		   \h{\oPipp T^n}{u}^p
		&= \vol_{n-1}(F) \h{T^n}{v}^{1-p} \langle v, u \rangle_+^p \\
		&= \frac {\sqrt{n}} {(n-1)!} \left( \frac 1 {\sqrt{n}} \right)^{1-p} \left\langle \frac 1 {\sqrt{n}} (e_1 + \ldots + e_n), u \right\rangle_+^p \\
		&= \frac 1 {(n-1)!} \left\langle e_1 + \ldots + e_n, u \right\rangle_+^p
	\end{align*}
	for all $u \in \R^n$.
	Let $i \in \{1,\ldots,n\}$.
	It follows that:
	\begin{align*}
		\h{ \oPipp T^n}{ e_i}^p &= \frac 1 {(n-1)!} ,\\
		\h{ \oPipp T^n}{ -e_i}^p &= 0 .
	\end{align*}
	Similarly we get:
	\begin{align*}
		\h{ \oPipm T^n}{ e_i}^p &= 0 ,\\
		\h{ \oPipm T^n}{ -e_i}^p &= \frac 1 {(n-1)!} .
	\end{align*}

	Next we consider $e_1 + T^n$.
	There are two facets such that the origin is not contained in the affine hull of the facet.
	The first one is $F_1 := e_1 + \conv\{ e_1,\ldots, e_n\}$ with outer unit normal $v_1 := \frac 1 {\sqrt{n}} (e_1 + \ldots + e_n)$.
	Note that $\vol_{n-1} F_1 = \frac {\sqrt{n}} {(n-1)!}$ and that $\h{e_1 + T^n}{v_1} = \frac 2 {\sqrt{n}}$.
	The second one is $F_2 := e_1 + \conv\{ o, e_2,\ldots, e_n\}$ with outer unit normal $v_2 := - e_1$.
	Note that $\vol_{n-1} F_2 = \frac 1 {(n-1)!}$ and that $\h{e_1 + T^n}{v_2} = -1$.
	Thus, we have
	\begin{align*}
		   \h{\oPippg(e_1 + T^n)}{u}^p
		&= \vol_{n-1}(F_1) \h{e_1 + T^n}{v_1}^{1-p} \langle v_1, u \rangle_+^p \\
		&= \frac {\sqrt{n}} {(n-1)!} \left( \frac 2 {\sqrt{n}} \right)^{1-p} \left\langle \frac 1 {\sqrt{n}} (e_1 + \ldots + e_n), u \right\rangle_+^p \\
		&= \frac {2^{1-p}} {(n-1)!} \left\langle e_1 + \ldots + e_n, u \right\rangle_+^p
	\end{align*}
	and
	\begin{align*}
		   \h{\oPippl(e_1 + T^n)}{u}^p
		&= \vol_{n-1}(F_2) |\h{e_1 + T^n}{v_2}|^{1-p} \langle v_2, u \rangle_+^p \\
		&= \frac 1 {(n-1)!} \left\langle - e_1, u \right\rangle_+^p \\
	\end{align*}
	for all $u \in \R^n$.
	It follows that:
	\begin{align*}
		\h{\oPippg (e_1 + T^n)}{e_2-e_1}^p &= 0 ,\\
		\h{\oPippg (e_1 + T^n)}{-e_2+e_1}^p &= 0 ,\\
		\h{\oPippl (e_1 + T^n)}{e_2-e_1}^p &= \frac 1 {(n-1)!} ,\\
		\h{\oPippl (e_1 + T^n)}{-e_2+e_1}^p &= 0 .
	\end{align*}
	Similarly we get:
	\begin{align*}
		\h{\oPipmg (e_1 + T^n)}{e_2-e_1}^p &= 0 ,\\
		\h{\oPipmg (e_1 + T^n)}{-e_2+e_1}^p &= 0 ,\\
		\h{\oPipml (e_1 + T^n)}{e_2-e_1}^p &= 0 ,\\
		\h{\oPipml (e_1 + T^n)}{-e_2+e_1}^p &= \frac 1 {(n-1)!} .
	\end{align*}
\end{example}

\begin{lemma}\th\label{sln contravariance gln}
	Let $\oPhi \colon \cQ^n \rightarrow \Cp$ be $\sln$-contravariant,
	where $\cQ^n$ is either $\cP^n_o$ or $\cP^n$.
	Furthermore let $\phi \in \gln$ with $\det \phi > 0$.
	Then
	\begin{equation*}
		\oPhi(\phi P) = \det(\phi)^{\frac p n} \oPhi \left( \det(\phi)^{\frac 1 n} P \right) \circ \phi^{-1}
	\end{equation*}
	for all $P \in \cQ^n$.
\end{lemma}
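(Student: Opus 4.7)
The plan is to reduce the $\gln$ statement to the given $\sln$-contravariance by factoring $\phi$ as the product of a positive scalar and an $\sln$-element, and then using the fact that the output lives in $\Cp$, so it is $p$-homogeneous.

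First, set $s := \det(\phi)^{1/n}$, which is a positive real number since $\det\phi > 0$. Define $\psi := s^{-1}\phi$; then $\det\psi = s^{-n}\det\phi = 1$, so $\psi \in \sln$. Since $s$ is a scalar, it commutes with $\psi$, and we may rewrite
\begin{equation*}
	\phi P = s\,\psi P = \psi(sP).
\end{equation*}
Note that $sP \in \cQ^n$ because both $\cP^n$ and $\cP^n_o$ are closed under positive dilations, so this expression makes sense inside the domain.

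Next, I would apply the $\sln$-contravariance of $\oPhi$ (in its $C(\R^n)$-valued formulation) to the $\sln$-element $\psi$ acting on $sP$, giving
\begin{equation*}
	\oPhi(\phi P) = \oPhi(\psi(sP)) = \oPhi(sP) \circ \psi^{-1}.
\end{equation*}
Since $\psi^{-1} = s\,\phi^{-1}$, evaluating at any $u \in \R^n$ yields
\begin{equation*}
	\oPhi(\phi P)(u) = \oPhi(sP)\bigl(s\,\phi^{-1}(u)\bigr).
\end{equation*}
Now use that $\oPhi(sP) \in \Cp$, so it is $p$-homogeneous, which allows us to pull the factor $s$ out as $s^p$:
\begin{equation*}
	\oPhi(sP)\bigl(s\,\phi^{-1}(u)\bigr) = s^p\,\oPhi(sP)\bigl(\phi^{-1}(u)\bigr) = \det(\phi)^{p/n}\,\bigl(\oPhi(sP) \circ \phi^{-1}\bigr)(u).
\end{equation*}
Substituting $s = \det(\phi)^{1/n}$ back in gives exactly the claimed identity.

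There is essentially no obstacle: the only thing to watch is that $\cQ^n$ is closed under positive scalar multiplication (so $sP$ lies in the domain) and under the action of $\phi$ itself (so $\phi P$ lies in the domain), both of which hold for $\cP^n$ and $\cP^n_o$. The $p$-homogeneity of elements of $\Cp$ is what converts the extra scalar $s$ inside the argument of $\oPhi(sP)$ into the prefactor $s^p = \det(\phi)^{p/n}$, which is precisely the discrepancy between $\sln$- and $\gln$-covariance that the statement records.
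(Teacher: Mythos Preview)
Your proof is correct and is exactly the argument the paper has in mind: factor $\phi = s\psi$ with $s = \det(\phi)^{1/n}$ and $\psi \in \sln$, apply $\sln$-contravariance, and then use the $p$-homogeneity of functions in $\Cp$ to extract the factor $s^p = \det(\phi)^{p/n}$. The paper's own proof is the one-line remark ``Since $\det(\phi)^{-1/n}\phi \in \sln$, this follows directly from the $\sln$-contravariance of $\oPhi$ and the $p$-homogeneity of the functions in $\Cp$,'' and you have simply written out those details.
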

\begin{proof}
	Since $\det(\phi)^{-\frac 1 n} \phi \in \sln$,
	this follows directly from the $\sln$-contravariance of $\oPhi$ and the $p$-homogeneity of the functions in $\Cp$.
\end{proof}

	\section{Main Results on $\cP^n_o$}
		%%%%%%%%%%%%%%%%%%%%%%%%%%%%%%%%%%%%%%%%%%%%%%%%%%%%%%%%%%%%%%%%%%%%%%%%%%%%%%%%
\begin{lemma}\th\label{sln contravariant simple}
	Let $\oPhi \colon \cP^n_o \rightarrow \Cp$ be an $\sln$-contravariant valuation.
	If $n \geq 3$, then $\oPhi$ is simple.
\end{lemma}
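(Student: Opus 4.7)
The plan is to prove $\oPhi(P) = 0$ for every $P \in \cP^n_o$ with $\dim P < n$, by induction on $\dim P$ and exploiting the stabilizer of $P$ in $\sln$. For $P = \{o\}$ every $\phi \in \sln$ fixes $P$, so $\sln$-contravariance makes $\oPhi(\{o\})$ an $\sln$-invariant function on $\R^n$; transitivity of $\sln$ on $\R^n \setminus \{o\}$, continuity, and $p$-homogeneity then force $\oPhi(\{o\}) = 0$.

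For the inductive step, assume $\oPhi$ vanishes on all polytopes of dimension $< k$, with $1 \leq k \leq n-1$. By \th\ref{incl excl Pno} and dissection into $k$-simplices with one vertex at $o$, it suffices to show $\oPhi(T) = 0$ for every such simplex $T = \conv\{o, v_1, \ldots, v_k\}$. Set $V = \lin T$, $W = V^\perp$ and write $u = (u_V, u_W)$. The stabilizer of $T$ in $\sln$ contains every block matrix $\phi = \begin{pmatrix} I_V & A \\ 0 & B \end{pmatrix}$ with $A \in \operatorname{Hom}(W,V)$ and $B \in \operatorname{SL}(W)$. When $k \leq n-2$, $\dim W \geq 2$ and $\operatorname{SL}(W)$ acts transitively on $W \setminus \{o\}$, so together with the freedom in $A$ the orbit of any $u$ with $u_W \neq 0$ is all of $V \times (W \setminus \{o\})$; $\sln$-contravariance, continuity, and $p$-homogeneity then yield $\oPhi(T) = 0$.

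The main obstacle is the case $k = n-1$: here $\dim W = 1$, the stabilizer collapses to one-parameter shears, and the stabilizer argument only forces
\[
\oPhi(T)(u) = c_+^T \langle u, w\rangle_+^p + c_-^T \langle u, w\rangle_-^p
\]
for a fixed unit $w \in W$ and constants $c_\pm^T \in \R$. To eliminate these constants I would combine three further uses of the hypotheses. First, for $\psi \in \operatorname{SL}(V)$ the block map $\psi \oplus 1_W$ lies in $\sln$, and $\sln$-contravariance gives $c_\pm^{\psi T} = c_\pm^T$; since $\dim V = n-1 \geq 2$ (the sole reason $n \geq 3$ is needed), $\operatorname{SL}(V)$ is transitive on $(n-1)$-simplices in $V$ with one vertex at $o$ and prescribed $(n-1)$-volume, so $c_\pm^T = g_\pm(\vol_{n-1}(T))$ for some functions $g_\pm \colon (0,\infty) \to \R$. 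Second, the anisotropic diagonal $\lambda I_V \oplus \lambda^{-(n-1)} I_W \in \sln$ maps $T$ to $\lambda T$, and $\sln$-contravariance yields $g_\pm(\mu v) = \mu^p g_\pm(v)$ for all $\mu > 0$. Third, cutting $T$ by the hyperplane $\lin\{o, p, v_3, \ldots, v_{n-1}\}$ with $p$ in the interior of the edge $\conv\{v_1, v_2\}$ dissects $T$ into two $(n-1)$-simplices $T_A, T_B$ with vertex $o$ whose intersection is an $(n-2)$-simplex with vertex $o$; by the inductive hypothesis this intersection contributes nothing, and the valuation identity becomes $g_\pm(v_A + v_B) = g_\pm(v_A) + g_\pm(v_B)$ for all $v_A, v_B > 0$. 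Equating $g_\pm(2v) = 2 g_\pm(v)$ (Cauchy-additivity) with $g_\pm(2v) = 2^p g_\pm(v)$ ($p$-homogeneity) and invoking $p > 1$ forces $g_\pm \equiv 0$, so $\oPhi(T) = 0$ and the induction closes.
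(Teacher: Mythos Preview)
Your argument is correct and follows the same overall strategy as the paper: handle $\dim P\le n-2$ by the stabilizer of the containing subspace, reduce the codimension-one case to the two values of $\oPhi(T)$ along the normal line, and kill these via a cutting argument together with $p>1$. The only difference is in how that last step is packaged. The paper chooses, for each $\lambda\in(0,1)$, explicit maps $\phi_\lambda,\psi_\lambda\in\sln$ that simultaneously carry $sT^{n-1}$ onto the two pieces of the cut \emph{and} rescale the normal coordinate by $\lambda$ and $1-\lambda$; contravariance then yields in one stroke the relation $c=(\lambda^p+(1-\lambda)^p)\,c$, which forces $c=0$. You instead separate this into three ingredients---volume dependence via $\operatorname{SL}(V)$-transitivity, $p$-homogeneity of $g_\pm$ via the anisotropic diagonal, and additivity of $g_\pm$ via the cut---and then conclude from $2\,g_\pm(v)=2^p g_\pm(v)$. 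Your route is a bit more modular and makes the Cauchy-equation structure explicit; the paper's is slightly quicker because a single clever $\sln$-map does the volume compensation and the normal rescaling in one move.
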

\begin{proof}
	Let $P \in \cP^n_o$ with $d := \dim P \leq n-1$.
	Because of the $\sln$-contravariance, we can assume without loss of generality that
	\begin{equation*}
		\aff P = \{e_1,\ldots,e_{n-d}\}^\perp = \lin\{e_{n-d+1}, \ldots, e_n\} .
	\end{equation*}
	Define $\phi \in \sln$ by
		$$\phi =
		\begin{pmatrix}
				A & 0 \\
				B & I \\
		\end{pmatrix} ,$$
	where $A \in \R^{(n-d) \times (n-d)}$ is a matrix with $\det A = 1$, $B \in \R^{d \times (n-d)}$ is an arbitrary matrix,
	$0 \in \R^{(n-d) \times d}$ is the zero matrix and $I \in \R^{d \times d}$ is the identity matrix.
	Let $x \in \R^n$.
	Write $x = (x', x'')^t \in \R^{n-d} \times \R^d$ and assume $x' \neq 0$.
	Since $\phi P = P$ and since $\oPhi$ is $\sln$-contravariant, we have
	\begin{equation}\label{sln contravariant simple - phi P}
		\oPhi(P)(x) = \oPhi(\phi P)(x) = \oPhi(P)(\phi^{-1} x) .
	\end{equation}
	A simple calculation yields
	\begin{equation}\label{sln contravariant simple - phi inverse}
		\phi^{-1} x =
		\begin{pmatrix}
				A^{-1} & 0 \\
				-B A^{-1} & I \\
		\end{pmatrix} \cdot
		\begin{pmatrix}
				x' \\
				x'' \\
		\end{pmatrix} =
		\begin{pmatrix}
				A^{-1} x' \\
				-B A^{-1} x' + x'' \\
		\end{pmatrix} .
	\end{equation}

	In the case $d \leq n-2$, we can choose $A$ with $\det A = 1$ such that $A^{-1} x'$ is any nonzero vector.
	After choosing $A$ we can choose $B$ such that $-B A^{-1} x' + x''$ is any vector.
	Combining \eqref{sln contravariant simple - phi P} and \eqref{sln contravariant simple - phi inverse} it follows that
	$\oPhi(P)$ is constant on a dense subset of $\R^n$.
	Because $\oPhi(P)$ is continuous, it must be constant everywhere.
	Since $\oPhi(P)(0) = 0$, we get $\oPhi(P) = 0$.

	Now let $d = n-1$.
	Using \th\ref{sln contravariance gln} and \th\ref{valuation determined simplices} in dimension $n-1$
	it is enough to show that $\oPhi(sT^{n-1}) = 0$ for $s > 0$,
	where $T^{n-1}$ denotes the standard simplex in $e_1^\perp$.
	Analogous to the case $d \leq n-2$ we see that $\oPhi(sT^{n-1})(x) = \oPhi(sT^{n-1})\left( (x',0)^t \right)$.
	Note that $x' \in \R$.
	Because $\oPhi(sT^{n-1})$ is $p$-homogeneous, we only need to show that $\oPhi(sT^{n-1})(\pm e_1) = 0$.

	Let $\lambda \in (0,1)$ and denote by $H_\lambda$ the hyperplane through $o$ with normal vector $\lambda e_2-(1-\lambda)e_3$.
	Since $\oPhi$ is a valuation, we have
	\begin{equation*}
		\oPhi(sT^{n-1}) + \oPhi(sT^{n-1} \cap H_\lambda) = \oPhi(sT^{n-1} \cap H_\lambda^+) + \oPhi(sT^{n-1} \cap H_\lambda^-) ,
	\end{equation*}
	where $H_\lambda^+$ and $H_\lambda^-$ denote the two halfspaces bounded by $H_\lambda$.
	Because $sT^{n-1} \cap H_\lambda$ has dimension $n-2$,
	the case $d \leq n-2$ shows that $\oPhi(sT^{n-1} \cap H_\lambda) = 0$ and we obtain
	\begin{equation}\label{sln contravariant simple - valuation property two}
		\oPhi(sT^{n-1}) = \oPhi(sT^{n-1} \cap H_\lambda^+) + \oPhi(sT^{n-1} \cap H_\lambda^-) .
	\end{equation}
	Define $\phi_\lambda \in \sln$ by
		$$\phi_\lambda e_1 = \frac 1 \lambda e_1 ,\quad
		  \phi_\lambda e_2 = e_2 ,\quad
		  \phi_\lambda e_3 = (1-\lambda)e_2 + \lambda e_3 ,\quad
		  \phi_\lambda e_k = e_k \quad \text{for } 4 \leq k \leq n$$
	and define $\psi_\lambda \in \sln$ by
		$$\psi_\lambda e_1 = \frac 1 {1-\lambda} e_1 ,\quad
		  \psi_\lambda e_2 = (1-\lambda)e_2 + \lambda e_3 ,\quad
		  \psi_\lambda e_3 = e_3 ,\quad
		  \psi_\lambda e_k = e_k \quad \text{for } 4 \leq k \leq n .$$
	Since
	\begin{equation*}
		sT^{n-1} \cap H_\lambda^+ = \phi_\lambda (sT^{n-1}) \quad \text{and} \quad
		sT^{n-1} \cap H_\lambda^- = \psi_\lambda (sT^{n-1}) ,
	\end{equation*}
	relation \eqref{sln contravariant simple - valuation property two} becomes
	\begin{equation*}
		\oPhi(sT^{n-1}) = \oPhi(\phi_\lambda (sT^{n-1})) + \oPhi(\psi_\lambda (sT^{n-1})) .
	\end{equation*}
	We rewrite the last equation at $e_1$ using the $\sln$-contravariance of $\oPhi$ and the $p$-homogeneity of the functions in $\Cp$ as
	\begin{align*}
		\oPhi(sT^{n-1})(e_1)
		&=
		\oPhi(\phi_\lambda (sT^{n-1}))(e_1) + \oPhi(\psi_\lambda (sT^{n-1}))(e_1) \\
		&=
		\oPhi(sT^{n-1})(\phi_\lambda^{-1} e_1) + \oPhi(sT^{n-1})(\psi_\lambda^{-1} e_1) \\
		&=
		\oPhi(sT^{n-1})(\lambda e_1) + \oPhi(sT^{n-1})((1-\lambda) e_1) \\
		&=
		\lambda^p \oPhi(sT^{n-1})(e_1) + (1-\lambda)^p \oPhi(sT^{n-1})(e_1) \\
		&=
		\left( \lambda^p + (1-\lambda)^p \right) \oPhi(sT^{n-1})(e_1) .
	\end{align*}
	Since $p > 1$, the resulting equation can only hold if $\oPhi(sT^{n-1})(e_1) = 0$.
	Similarly we see that $\oPhi(sT^{n-1})(-e_1) = 0$.
\end{proof}

\begin{theorem}\th\label{class sln contra}
	Let $\oPhi \colon \cP^n_o \rightarrow \Cp$ be an $\sln$-contravariant valuation.
	Assume further that for every $y \in \R^n$ there exists a bounded open interval $I_y \subseteq (0,+\infty)$
	such that $\{\oPhi(sT^n)(y): s \in I_y\}$ is not dense in $\R$.
	If $n \geq 3$, then there exist constants $c_1, c_2 \in \R$ such that
	\begin{equation*}
		\oPhi P = c_1 \h{\oPipp P}{.}^p + c_2 \h{\oPipm P}{.}^p
	\end{equation*}
	for all $P \in \cP^n_o$.
\end{theorem}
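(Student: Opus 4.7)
By Lemma \th\ref{sln contravariant simple}, $\oPhi$ is simple. Every $n$-simplex with a vertex at the origin has the form $\phi T^n$ with $\phi \in \gln$, and after possibly precomposing with a transposition of two standard basis vectors (which fixes $T^n$) one may assume $\det \phi > 0$. Combined with Lemma \th\ref{valuation determined simplices} and Lemma \th\ref{sln contravariance gln}, this reduces the determination of $\oPhi$ to that of the family $\{\oPhi(sT^n) : s > 0\} \subset \Cp$. The plan is to first establish the scaling $\oPhi(sT^n) = s^{n-p}\,\oPhi(T^n)$ and then to pin down $F := \oPhi(T^n)$ explicitly.

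Fix $\lambda \in (0,1)$ and cut $sT^n$ through the origin by the hyperplane with normal $\lambda e_2 - (1-\lambda)e_3$. The two pieces are $\phi_\lambda(sT^n)$ and $\psi_\lambda(sT^n)$, where $\phi_\lambda,\psi_\lambda \in \gln$ act as the identity on $\lin\{e_1,e_4,\ldots,e_n\}$ and satisfy $\det \phi_\lambda = \lambda$, $\det \psi_\lambda = 1-\lambda$. Simplicity together with Lemma \th\ref{sln contravariance gln} yields
\[
\oPhi(sT^n)(y) = \lambda^{p/n}\,\oPhi(\lambda^{1/n}sT^n)(\phi_\lambda^{-1}y) + (1-\lambda)^{p/n}\,\oPhi((1-\lambda)^{1/n}sT^n)(\psi_\lambda^{-1}y).
\]
For $y \in \lin\{e_1,e_4,\ldots,e_n\}$ both $\phi_\lambda^{-1}$ and $\psi_\lambda^{-1}$ fix $y$; substituting $t := s^n$ and $h(t) := t^{p/n}\,\oPhi(t^{1/n}T^n)(y)$ reduces this to Cauchy's equation $h(u+v) = h(u)+h(v)$ on $(0,+\infty)$. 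After shrinking $I_y$ to a sub-interval $(a,b)$ with $(b/a)^p$ sufficiently close to $1$, the non-density of $\{\oPhi(sT^n)(y) : s \in (a,b)\}$ transfers to non-density of $h$ on $(a^n,b^n)$; by the preliminaries on Cauchy's equation, $h$ must therefore be linear, and unwinding gives $\oPhi(sT^n)(y) = s^{n-p}\,F(y)$. For an arbitrary nonzero $y$, choose $\tau \in \sln$ with $\tau^{-1}y \in \lin\{e_1,e_4,\ldots,e_n\}$ and apply the same argument to the $\tau$-conjugated cut; $\sln$-contravariance then propagates the scaling to all of $\R^n$.

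Substituting the scaling back into the cut equation and dividing by $s^{n-p}$ yields the functional equation $F(y) = \lambda\,F(\phi_\lambda^{-1}y) + (1-\lambda)\,F(\psi_\lambda^{-1}y)$. Both $\phi_\lambda^{-1}$ and $\psi_\lambda^{-1}$ preserve the linear form $L(y) := y_1 + \cdots + y_n$. Writing $\phi_\lambda^{-1}y = \lambda^{-1}w_\lambda$ with $w_\lambda \to y_3(e_3-e_2)$ as $\lambda \to 0^+$, the $p$-homogeneity of $F$ gives $\lambda\,F(\phi_\lambda^{-1}y) = \lambda^{1-p}\,F(w_\lambda)$; since $p > 1$ and the functional equation must remain finite, this forces $F(e_3-e_2) = 0$, and by $A_n$-invariance $F(e_i-e_j) = 0$ for all $i \neq j$. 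Iterating the functional equation and combining with $A_n$-invariance, $p$-homogeneity, and continuity then shows that $F$ depends on $y$ only through $L(y)$, so that $F(y) = c_+\,L(y)_+^p + c_-\,L(y)_-^p$ for some $c_\pm \in \R$. Setting $c_1 := (n-1)!\,c_+$ and $c_2 := (n-1)!\,c_-$, Example \th\ref{eval T^n} then gives $\oPhi(T^n) = c_1\,\h{\oPipp T^n}{\cdot}^p + c_2\,\h{\oPipm T^n}{\cdot}^p$, and the reductions of the first paragraph (together with Remark \th\ref{operator sln contra val rem}) extend this identity to all of $\cP^n_o$. The hardest step is the last leg of identifying $F$: the vectors $e_i-e_j$ span $L^{-1}(0)$, but the non-linearity of $F$ prevents an immediate conclusion $F \equiv 0$ on $L^{-1}(0)$, and iterated use of the functional equation combined with continuity is required.
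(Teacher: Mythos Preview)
Your overall route matches the paper's: reduce to the family $\{\oPhi(sT^n):s>0\}$ via \th\ref{valuation determined simplices} and \th\ref{sln contravariance gln}, derive a functional equation from a hyperplane cut of $sT^n$, solve Cauchy to obtain the $s^{n-p}$ scaling, and then identify $\oPhi(T^n)$. Your remark on shrinking $I_y$ so that non-density transfers from $\{\oPhi(sT^n)(y)\}$ to $\{h(t)=t^{p/n}\oPhi(t^{1/n}T^n)(y)\}$ is a careful point that the paper actually glosses over.

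There is, however, a genuine gap at the step where you extend the scaling from $\lin\{e_1,e_4,\ldots,e_n\}$ to all of $\R^n$. $\sln$-contravariance only gives $\oPhi(sT^n)(\tau^{-1}y)=\oPhi(s\,\tau T^n)(y)$, which does not relate back to $\oPhi(sT^n)(y)$ unless $\tau T^n=T^n$, i.e.\ unless $\tau$ is an even coordinate permutation; and if ``$\tau$-conjugated cut'' means cutting by $\tau H_\lambda$, the pieces $sT^n\cap\tau H_\lambda^\pm$ are no longer $\gln$-images of $T^n$. Varying the cut over all coordinate pairs $(i,j)$ only yields the scaling on $\bigcup_{i\neq j}\{e_i,e_j\}^\perp$, the set of vectors with at least two vanishing coordinates. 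Since your functional equation $F(y)=\lambda F(\phi_\lambda^{-1}y)+(1-\lambda)F(\psi_\lambda^{-1}y)$ and your limiting argument for $F(e_i-e_j)=0$ both presuppose the scaling at \emph{general} $y$, this gap propagates forward; and as you yourself flag, the final claim that $F$ depends only on $L(y)$ remains unproved.

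The paper avoids both difficulties with a single device. It establishes the $s^{n-p}$ law only at the points $\pm e_i$ (which is all the Cauchy step actually gives) and then proves the full identity $\oPhi(sT^n)(x)=c_1\h{\oPipp(sT^n)}{x}^p+c_2\h{\oPipm(sT^n)}{x}^p$ by induction on the number $m$ of nonzero coordinates of $x$. For $m\ge 2$ with, say, $x_1,x_2\neq 0$, a specific choice of $\lambda$ (namely $\lambda=\tfrac{x_2}{x_1+x_2}$ when $x_1,x_2$ have the same sign, and $\lambda=1+\tfrac{x_1}{x_2}$ together with evaluation of the functional equation at $\phi_\lambda x$ rather than at $x$ when they have opposite signs) produces arguments with only $m-1$ nonzero coordinates. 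This one induction simultaneously supplies the missing homogeneity and replaces your ``iterated use of the functional equation''.
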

\begin{proof}
	By \th\ref{sln contravariance gln} and \th\ref{valuation determined simplices}, it is sufficient to proof that
	\begin{equation}\label{class sln contra - simplices}
		\oPhi( s T^n ) = c_1 \h{\oPipp( s T^n )}{.}^p + c_2 \h{\oPipm( s T^n )}{.}^p
	\end{equation}
	for $s > 0$.

	\noindent\textbf{1. Functional Equation: }
	Let $\lambda \in (0,1)$ and denote by $H_\lambda$ the hyperplane through $o$ with normal vector $\lambda e_1-(1-\lambda)e_2$.
	Since $\oPhi$ is a valuation, we have
	\begin{equation*}
		\oPhi(sT^n) + \oPhi(sT^n \cap H_\lambda) = \oPhi(sT^n \cap H_\lambda^+) + \oPhi(sT^n \cap H_\lambda^-) .
	\end{equation*}
	Because $sT^n \cap H_\lambda$ has dimension $n-1$,
	\th\ref{sln contravariant simple} shows that $\oPhi(sT^n \cap H_\lambda) = 0$ and we obtain
	\begin{equation}\label{class sln contra - val property two}
		\oPhi(sT^n) = \oPhi(sT^n \cap H_\lambda^+) + \oPhi(sT^n \cap H_\lambda^-) .
	\end{equation}
	Define $\phi_\lambda \in \gln$ by
		$$\phi_\lambda e_1 = e_1 ,\quad
		  \phi_\lambda e_2 = (1-\lambda)e_1 + \lambda e_2 ,\quad
		  \phi_\lambda e_k = e_k \quad \text{for } 3 \leq k \leq n$$
	and define $\psi_\lambda \in \gln$ by
		$$\psi_\lambda e_1 = (1-\lambda)e_1 + \lambda e_2 ,\quad
		  \psi_\lambda e_2 = e_2 ,\quad
		  \psi_\lambda e_k = e_k \quad \text{for } 3 \leq k \leq n .$$
	Note that
	\begin{equation}\label{class sln contra - det}
		\det(\phi_\lambda) = \lambda \quad \text{and} \quad \det(\psi_\lambda) = 1-\lambda .
	\end{equation}
	Since
		$$T^n \cap H_\lambda^+ = \phi_\lambda T^n \quad \text{and} \quad
		  T^n \cap H_\lambda^- = \psi_\lambda T^n ,$$
	relation \eqref{class sln contra - val property two} becomes
	\begin{equation*}
		\oPhi(sT^n) = \oPhi(\phi_\lambda sT^n) + \oPhi(\psi_\lambda sT^n) .
	\end{equation*}
	Using \th\ref{sln contravariance gln} and \eqref{class sln contra - det}, we can rewrite the last equation as
	\begin{equation}\label{class sln contra - functional equation}
		\oPhi(sT^n)(x) =
		\lambda^{\frac p n} \oPhi\left( \lambda^{\frac 1 n} sT^n \right)(\phi_\lambda^{-1} x)
		+ (1-\lambda)^{\frac p n} \oPhi\left( (1-\lambda)^{\frac 1 n} sT^n \right)(\psi_\lambda^{-1} x)
	\end{equation}
	for all $x \in \R^n$.

	\noindent\textbf{2. Homogeneity: }
	For $y \in \{e_1,e_2\}^\perp$ this becomes
	\begin{equation*}
		\oPhi(sT^n)(y) =
		\lambda^{\frac p n} \oPhi\left( \lambda^{\frac 1 n} sT^n \right)(y) + (1-\lambda)^{\frac p n} \oPhi\left( (1-\lambda)^{\frac 1 n} sT^n \right)(y) .
	\end{equation*}
	Define $g(s) = \oPhi\left( s^{\frac 1 n} T^n \right)(y)$, then we have
	\begin{equation*}
		g(s) = \lambda^{\frac p n} g(\lambda s) + (1-\lambda)^{\frac p n} g((1-\lambda) s) .
	\end{equation*}
	Let $a,b > 0$.
	Setting $s = a+b$ and $\lambda = \frac {a} {a+b}$ we obtain:
	\begin{align*}
		g(a+b) &= \left( \frac {a} {a+b} \right)^{\frac p n} g(a) + \left( \frac {b} {a+b} \right)^{\frac p n} g(b) ,\\
		(a+b)^{\frac p n}g(a+b) &= a^{\frac p n} g(a) + b^{\frac p n} g(b) .
	\end{align*}
	Thus, $s \mapsto s^{\frac p n} g(s)$ solves Cauchy's functional equation for $s > 0$.
	By the assumption that there is a bounded open interval $I_y$ such that $g(I_y)$ is not dense in $\R$,
	it follows that $s \mapsto s^{\frac p n} g(s)$ is linear.
	This implies $s^{\frac p n} g(s) = s g(1)$ and hence $g(s) = s^{1-\frac p n} g(1)$.
	The definition of $g$ therefore yields
	\begin{equation*}
		\oPhi(s T^n)(y) = g(s^n) = s^{n-p} g(1) = s^{n-p} \oPhi(T^n)(y) .
	\end{equation*}
	Since $n \geq 3$ and since we can repeat the above argument for any two standard basis vectors, we obtain
	\begin{equation}\label{class sln contra - homogeneity}
		\oPhi(s T^n)(\pm e_i) = s^{n-p} \oPhi(T^n)(\pm e_i) \quad \text{for } i = 1,\ldots,n .
	\end{equation}

	\noindent\textbf{3. Constants: }
	Let $i \in \{1,\ldots,n\}$.
	Since $n \geq 3$, we can find a permutation of the coordinates $\phi \in \sln$ such that $\phi e_1 = e_i$.
	It follows that
	\begin{equation}\label{class sln contra - values on standard basis}
		\oPhi(T^n)(e_i) = \oPhi(T^n)(\phi^{-1} e_1) = \oPhi( \phi T^n)(e_1) = \oPhi(T^n)(e_1) .
	\end{equation}
	Similarly we get $\oPhi(T^n)(-e_i) = \oPhi(T^n)(-e_1)$.
	Set
	\begin{equation}\label{class sln contra - constants}
		c_1 = (n-1)! \oPhi(T^n)(e_1) \quad \text{and} \quad c_2 = (n-1)! \oPhi(T^n)(-e_1) .
	\end{equation}
	
	\noindent\textbf{4. Induction: }
	We are now going to show by induction on the number $m$ of coordinates of $x$ not equal to zero that
	\begin{equation}\label{class sln contra - induction}
		\oPhi( s T^n )(x) = c_1 \h{\oPipp( sT^n )}{x}^p + c_2 \h{\oPipm( s T^n)}{x}^p
	\end{equation}
	for $s > 0$ and for all $x \in \R^n$.
	Note that since $P \mapsto c_1 \h{\oPipp(P)}{.}^p + c_2 \h{\oPipm(P)}{.}^p$ satisfies the assumptions of the theorem it also satisfies
	\eqref{class sln contra - functional equation} and \eqref{class sln contra - homogeneity}.
	
	The case $m = 0$ is trivial.
	The case $m = 1$ is also easy to verify with \eqref{class sln contra - homogeneity},
	\eqref{class sln contra - values on standard basis}, \eqref{class sln contra - constants} and \th\ref{eval T^n}.
	Let $m \geq 2$ and write $x = (x_1, \ldots, x_n)^t$.
	Assume without loss of generality that $x_1, x_2 \neq 0$ and that $|x_1| \leq |x_2|$.
	Since the functions in $\Cp$ are continuous, we can further assume that $|x_1| < |x_2|$.

	First consider the case where $x_1,x_2$ have the same sign.
	Set $\lambda = \frac {x_2} {x_1 + x_2} \in (0,1)$ and calculate
	\begin{align*}
		&  \phi_\lambda \left( (x_1 + x_2) e_2 + x_3 e_3 + \ldots + x_n e_n \right) \\
		&= (x_1 + x_2) (1-\lambda) e_1 + (x_1 + x_2) \lambda e_2 + x_3 e_3 + \ldots + x_n e_n \\
		&= x_1 e_1 + x_2 e_2 + x_3 e_3 + \ldots + x_n e_n \\
		&= x
	\end{align*}
	or equivalently
	\begin{equation*}
		\phi_\lambda^{-1} x = \left( (x_1 + x_2) e_2 + x_3 e_3 + \ldots + x_n e_n \right) .
	\end{equation*}
	Similarly we calculate
	\begin{equation*}
		\psi_\lambda^{-1} x = \left( (x_1 + x_2) e_1 + x_3 e_3 + \ldots + x_n e_n \right) .
	\end{equation*}
	Using \eqref{class sln contra - functional equation} and the induction hypotheses yields the desired result.
	
	Now consider the case where $x_1,x_2$ have different signs.
	Set $\lambda = 1+ \frac {x_1} {x_2} \in (0,1)$ and calculate
	\begin{align*}
		\phi_\lambda x
		&= x_1 e_1 + x_2 (1-\lambda) e_1 + x_2 \lambda e_2 + x_3 e_3 + \ldots + x_n e_n \\
		&= (x_1 + x_2) e_2 + x_3 e_3 + \ldots + x_n e_n .
	\end{align*}
	Similarly we calculate
	\begin{equation*}
		\psi_\lambda^{-1} \phi_\lambda x = (x_1 + x_2) e_2 + x_3 e_3 + \ldots + x_n e_n .
	\end{equation*}
	Using \eqref{class sln contra - functional equation} with $x$ replaced by $\phi_\lambda x$ and using the induction hypotheses yields the desired result.

	This completes the induction and proves \eqref{class sln contra - induction} and thus \eqref{class sln contra - simplices}.
\end{proof}

\begin{remark}\th\label{further assumption}
	The additional assumption on the boundedness in the last theorem is only used for the vectors of the standard basis
	and their inverses in the proof.
	By reasoning similar to step three in the proof of \th\ref{class sln contra} it is enough
	to have this assumption for only one standard basis vector and its reflection at the origin.
\end{remark}

\begin{corollary}\th\label{class sln contra mink}
	Let $\oPhi \colon \cP^n_o \rightarrow \cK^n_o$ be an $\sln$-contravariant $\Lp$-Minkowski valuation.
	If $n \geq 3$, then there exist constants $c_1, c_2 \geq 0$ such that
	\begin{equation*}
		\oPhi P = c_1 \oPipp P \pp c_2 \oPipm P
	\end{equation*}
	for all $P \in \cP^n_o$.
\end{corollary}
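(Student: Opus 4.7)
The plan is to reduce this corollary to \th\ref{class sln contra} by passing from the $\Lp$-Minkowski valuation $\oPhi$ to the scalar-valued valuation $P \mapsto \h{\oPhi P}{.}^p$. Concretely, define $\tilde\oPhi \colon \cP^n_o \to \Cp$ by $\tilde\oPhi(P) = \h{\oPhi P}{.}^p$. The discussion in Section 4 shows that $\tilde\oPhi$ is an $\sln$-contravariant valuation into $\Cp$: the valuation property is precisely the defining property of an $\Lp$-Minkowski valuation, and the $\sln$-contravariance of $\oPhi$ translates into $\tilde\oPhi(\phi P) = \tilde\oPhi(P) \circ \phi^{-1}$ by the identity $\h{\phi^{-t} K}{u} = \h{K}{\phi^{-1} u}$.

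Next I would verify the technical density hypothesis of \th\ref{class sln contra}. Since $\oPhi P \in \cK^n_o$ we have $\h{\oPhi P}{.} \geq 0$, so $\tilde\oPhi(sT^n)(y) \geq 0$ for every $s > 0$ and every $y \in \R^n$. In particular, for any bounded open interval $I \subseteq (0, +\infty)$ the image $\{\tilde\oPhi(sT^n)(y) : s \in I\}$ is contained in $[0, +\infty)$ and thus fails to be dense in $\R$. Hence the hypotheses of \th\ref{class sln contra} are satisfied and we obtain real constants $\tilde c_1, \tilde c_2$ with
\begin{equation*}
	\h{\oPhi P}{.}^p = \tilde c_1 \h{\oPipp P}{.}^p + \tilde c_2 \h{\oPipm P}{.}^p
\end{equation*}
for all $P \in \cP^n_o$.

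It remains to show $\tilde c_1, \tilde c_2 \geq 0$ and to convert the representation back into one involving $\pp$. For non-negativity, evaluate the identity at $P = T^n$ and $u = \pm e_1$. By \th\ref{eval T^n} we have $\h{\oPipp T^n}{e_1}^p = \frac{1}{(n-1)!}$ and $\h{\oPipm T^n}{e_1}^p = 0$, so $\h{\oPhi T^n}{e_1}^p = \tilde c_1/(n-1)!$, which forces $\tilde c_1 \geq 0$. The analogous computation at $-e_1$ gives $\tilde c_2 \geq 0$. Setting $c_1 = \tilde c_1^{1/p}$ and $c_2 = \tilde c_2^{1/p}$ and invoking the $p$-homogeneity $\h{sK}{.}^p = s^p \h{K}{.}^p$ together with the definition of $\pp$, we obtain
\begin{equation*}
	\h{\oPhi P}{.}^p = \h{c_1 \oPipp P}{.}^p + \h{c_2 \oPipm P}{.}^p = \h{c_1 \oPipp P \pp c_2 \oPipm P}{.}^p,
\end{equation*}
which is the desired identity. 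The only even mildly delicate step is verifying the density hypothesis of \th\ref{class sln contra}; everything else is routine and is handled automatically by the non-negativity of support functions on $\cK^n_o$.
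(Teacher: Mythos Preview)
Your proposal is correct and follows essentially the same route as the paper: pass to the scalar valuation $P \mapsto \h{\oPhi P}{.}^p$, use non-negativity of support functions on $\cK^n_o$ to satisfy the density hypothesis of \th\ref{class sln contra}, and then read off $\tilde c_1, \tilde c_2 \geq 0$ by evaluating at $T^n$ and $\pm e_1$. The only cosmetic difference is that the paper cites the explicit formula \eqref{class sln contra - constants} for the constants, whereas you obtain the same identities by plugging $T^n$ and $\pm e_1$ into the representation and invoking \th\ref{eval T^n}; these are equivalent.
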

\begin{proof}
	Since $\oPhi(sT^n) \in \cK^n_o$, we have $\h{\oPhi(sT^n)}{y} \geq 0$ for $s > 0$ and for all $y \in \R^n$.
	Therefore the map
	\begin{equation*}
		P \mapsto \h{\oPhi P}{.}^p, \quad P \in \cP^n_o
	\end{equation*}
	satisfies the assumptions of \th\ref{class sln contra}.
	Thus there exist constants $d_1, d_2 \in \R$ such that
	\begin{equation*}
		\h{\oPhi P}{.}^p = d_1 \h{\oPipp P}{.}^p + d_2 \h{\oPipm P}{.}^p .
	\end{equation*}
	According to \eqref{class sln contra - constants} these constants are given by:
	\begin{align*}
		d_1 &= (n-1)! \h{\oPhi T^n}{e_1}^p ,\\
		d_2 &= (n-1)! \h{\oPhi T^n}{-e_1}^p .
	\end{align*}
	It follows that $d_1, d_2 \geq 0$.
	Defining $c_1 = \sqrt[p]{d_1}$ and $c_2 = \sqrt[p]{d_2}$ completes the proof.
\end{proof}

The first main theorem from the introduction now follows from \th\ref{class sln contra mink}
and the fact that $\oPipp$ and $\oPipm$ have the desired properties.

	\section{Main Results on $\cP^n$}
		%%%%%%%%%%%%%%%%%%%%%%%%%%%%%%%%%%%%%%%%%%%%%%%%%%%%%%%%%%%%%%%%%%%%%%%%%%%%%%%%
\begin{lemma}\th\label{sln contravariant Pn zero}
	Let $\oPhi \colon \cP^n \rightarrow \Cp$ be an $\sln$-contravariant valuation.
	If $n \geq 3$, then $\oPhi P = 0$ for all $P \in \cP^n$ with $\dim P \leq n-2$ and for all $P \in \cP^n$ with $\dim P = n-1$ and $0 \in \aff P$.
\end{lemma}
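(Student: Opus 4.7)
The plan is to split into cases according to $\dim P$ and whether $0 \in \aff P$, adapting the stabilizer argument from \th\ref{sln contravariant simple}. First I would treat $\dim P = d \leq n-2$. When $0 \in \aff P$, I would normalize $\aff P = \lin\{e_{n-d+1},\ldots,e_n\}$ by $\sln$-contravariance and use exactly the pointwise stabilizer from the proof of \th\ref{sln contravariant simple}: since $n-d \geq 2$, the $\operatorname{SL}(n-d)$-action on the transverse block makes the $\phi^{-1}$-orbit dense, and $p$-homogeneity then forces $\oPhi(P) = 0$. When $0 \notin \aff P$, I would normalize $\aff P = e_n + \lin\{e_1,\ldots,e_d\}$; the pointwise stabilizer now fixes $e_1,\ldots,e_d$ and $e_n$ with $\operatorname{SL}(n-d-1)$-freedom in the middle block plus shears. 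Provided $d \leq n-3$, so that $n-d-1 \geq 2$, the same density-plus-homogeneity argument closes the case.

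The delicate subcase is $d = n-2$ with $0 \notin \aff P$, where the stabilizer only provides shear freedom. A direct calculation will give
\begin{equation*}
    \oPhi(P)(x) = c_+(P)\langle e_{n-1},x\rangle_+^p + c_-(P)\langle e_{n-1},x\rangle_-^p ,
\end{equation*}
and by the already-proven lower-dimensional cases, $c_\pm$ are simple valuations on polytopes in the $(n-2)$-dimensional affine space $\aff P$. I would then exploit the larger subgroup of $\sln$ preserving $\aff P$ setwise, which realizes all orientation-preserving affine transformations of $\aff P$: an $\sln$-contravariance computation will show that $c_\pm$ is invariant under the volume-preserving subgroup $\operatorname{SL}(n-2) \ltimes \R^{n-2}$ and scales as $(\det\tilde\phi)^p$ under dilations of $\aff P$. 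Hence on $(n-2)$-simplices of fixed orientation, $c_\pm(Q) = C_\pm \vol_{n-2}(Q)^p$ for constants $C_\pm$. Bisecting such a simplex into two halves of equal volume (meeting in a lower-dimensional piece on which $c_\pm$ vanishes by simplicity) and invoking the valuation property then gives $C_\pm = 2^{1-p}C_\pm$, which for $p > 1$ forces $C_\pm = 0$ and hence $\oPhi(P) = 0$. This is the main obstacle of the proof.

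Finally, for $\dim P = n-1$ with $0 \in \aff P$, I would normalize $\aff P = e_1^\perp$. If $0 \in P$, then $P \in \cP^n_o$ and \th\ref{sln contravariant simple} applied to the restriction $\oPhi|_{\cP^n_o}$ (itself a valuation and $\sln$-contravariant) gives $\oPhi(P) = 0$. If $0 \notin P$, I would set $P_o := \conv(\{0\} \cup P) \in \cP^n_o$, noting $\dim P_o = n-1$ since $0 \in \aff P$. The classical convex-hull decomposition writes $P_o = P \cup \bigcup_i \conv(\{0\} \cup F_i)$, where $F_i$ runs over the facets of $P$ in $\aff P$ visible from $0$, with all pairwise and higher intersections of dimension at most $n-2$. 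In the inclusion-exclusion identity of \th\ref{incl excl Pn}, $\oPhi(P_o)$ and every $\oPhi(\conv(\{0\} \cup F_i))$ vanish by \th\ref{sln contravariant simple} on $\cP^n_o$, and each intersection term vanishes by the $\dim \leq n-2$ cases already handled, so the identity collapses to $\oPhi(P) = 0$.
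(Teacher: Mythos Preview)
Your argument is correct, but the route you take in the delicate subcase $d=n-2$, $0\notin\aff P$ is longer than the paper's. The paper first normalizes $P\subseteq e_1^\perp$ and then, via \th\ref{sln contravariance gln} and \th\ref{valuation determined simplices Pn} applied inside $\aff P$, reduces immediately to showing $\oPhi(sT^{n-2})=0$ for $T^{n-2}=\conv\{e_2,\ldots,e_n\}$ and $s>0$; for this it simply reuses verbatim the $d=n-1$ calculation from the proof of \th\ref{sln contravariant simple} (the shear reduction to one coordinate, followed by the split giving $\oPhi(sT^{n-2})(e_1)=(\lambda^p+(1-\lambda)^p)\,\oPhi(sT^{n-2})(e_1)$). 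Your approach instead characterizes $\oPhi(P)$ for \emph{all} $P$ in the fixed affine subspace, identifies the coefficients $c_\pm$ as simple valuations equivariant under the affine group of $\aff P$, forces $c_\pm(Q)=C_\pm\vol_{n-2}(Q)^p$, and then kills $C_\pm$ by bisection. Both arguments rest on the same $\lambda^p+(1-\lambda)^p\neq 1$ obstruction (your bisection is the case $\lambda=\tfrac12$); the paper's version is shorter because it recycles an existing computation, while yours gives a cleaner structural explanation and would generalize more readily. For the final case $d=n-1$, your inclusion--exclusion argument is essentially an inline proof of \th\ref{simple determined cPno} in dimension $n-1$, which the paper simply cites.
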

\begin{proof}
	Since $\oPhi$ is $\sln$-contravariant, it is sufficient to prove $\oPhi P = 0$ for all $P \in \cP^n$ with $P \subseteq e_1^\perp$.
	Let $d := \dim P$.
	There are several different cases:
	\begin{enumerate}[(i)]
		\item $d \leq n-3$
		\item $d = n-2$ and $0 \in \aff P$
		\item $d = n-2$ and $0 \not\in \aff P$
		\item $d = n-1$ .
	\end{enumerate}

	In the cases (i) and (ii) we can use the $\sln$-contravariance of $\oPhi$ to assume that $P \subseteq \{e_1,e_2\}^\perp$.
	Here we can use the same reasoning as for the case $d = n-2$ in the proof of \th\ref{sln contravariant simple}.

	Now consider the case (iii).
	Using \th\ref{sln contravariance gln} and \th\ref{valuation determined simplices Pn} in $\aff P$
	it is enough to show that $\oPhi(sT^{n-2}) = 0$ for $s > 0$,
	where $T^{n-2} := \conv\{e_2,\ldots,e_n\}$.
	Here we can use the same reasoning as for the case $d = n-1$ in the proof of \th\ref{sln contravariant simple}.

	Finally consider the case (iv).
	\th\ref{sln contravariant simple} shows that $\oPhi P = 0$ if $0 \in P$.
	Using the cases (i)-(iii) and \th\ref{simple determined cPno} in dimension $n-1$ gives the desired result.
\end{proof}

\begin{theorem}\th\label{class sln contra Pn simple}
	Let $\oPhi \colon \cP^n \rightarrow \Cp$ be a simple $\sln$-contravariant valuation.
	Assume further that for every $y \in \R^n$ there exists a bounded open interval $I_y \subseteq (0,+\infty)$
	such that $\{\oPhi(sT^n)(y): s \in I_y\}$ is not dense in $\R$.
	If $n \geq 3$, then there exist constants $c_1, c_2 \in \R$ such that
	\begin{equation*}
		\oPhi P = c_1 \oDeltapp P + c_2 \oDeltapm P
	\end{equation*}
	for all $P \in \cP^n$.
\end{theorem}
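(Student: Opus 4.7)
The plan is to reduce to the already-proved Theorem \th\ref{class sln contra} by restricting to $\cP^n_o$, and then to transport the resulting identity to all of $\cP^n$ via simplicity together with Lemma \th\ref{simple determined cPno}. The operators $\oDeltapp$ and $\oDeltapm$ were defined precisely as simple $\sln$-contravariant extensions of $P \mapsto \h{\oPipp P}{.}^p$ and $P \mapsto \h{\oPipm P}{.}^p$ from $\cP^n_o$ to $\cP^n$, so once both sides are known to agree on $\cP^n_o$, matching them on $\cP^n$ is essentially automatic.

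First I would restrict $\oPhi$ to $\cP^n_o$. The restriction is still an $\sln$-contravariant valuation into $\Cp$, and the density hypothesis carries over verbatim, since $sT^n \in \cP^n_o$ for every $s > 0$. Theorem \th\ref{class sln contra} then produces constants $c_1, c_2 \in \R$ with
\[
\oPhi P = c_1 \h{\oPipp P}{.}^p + c_2 \h{\oPipm P}{.}^p \qquad \text{for all } P \in \cP^n_o.
\]

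Next I would introduce the auxiliary map $\oPsi := c_1 \oDeltapp + c_2 \oDeltapm \colon \cP^n \to \Cp$. By the remark following Lemma \th\ref{operator sln contra val}, both $\oDeltapp$ and $\oDeltapm$ are $\sln$-contravariant valuations, and each is simple by its very definition as a simple extension from $\cP^n_o$; consequently $\oPhi - \oPsi$ is a simple $\sln$-contravariant valuation. A direct inspection of the definitions shows that for $P \in \cP^n_o$ one has $\oDeltapp P = \h{\oPipp P}{.}^p$ and $\oDeltapm P = \h{\oPipm P}{.}^p$, because all support values $\h{P}{v}$ are nonnegative and the cases $\h{P}{v}=0$ correspond exactly to facets through the origin, which are precisely the terms excluded from $\oPipp$ and $\oPipm$. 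Hence $\oPhi - \oPsi$ vanishes on $\cP^n_o$.

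Lemma \th\ref{simple determined cPno} applied with $A = \Cp$ then forces $\oPhi - \oPsi \equiv 0$ on all of $\cP^n$, which is the desired identity. The only real obstacle here is the bookkeeping around the extensions: one has to confirm that $\oDeltapp$ and $\oDeltapm$ genuinely restrict to $\h{\oPipp \cdot}{.}^p$ and $\h{\oPipm \cdot}{.}^p$ on $\cP^n_o$, and that they globally inherit the valuation property and the $\sln$-contravariance. Both checks follow from the definitions and Lemma \th\ref{operator val}. All the substantive analytic content, namely the Cauchy-functional-equation argument that pins down the homogeneity exponent, is already packed into Theorem \th\ref{class sln contra}.
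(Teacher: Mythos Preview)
Your proposal is correct and follows essentially the same route as the paper's proof: restrict to $\cP^n_o$, invoke Theorem~\ref{class sln contra} to obtain the constants, and then use that $c_1\oDeltapp + c_2\oDeltapm$ is a simple $\sln$-contravariant valuation agreeing with $\oPhi$ on $\cP^n_o$, so Lemma~\ref{simple determined cPno} forces equality on all of $\cP^n$. The only difference is cosmetic---you phrase the last step as ``$\oPhi-\oPsi$ vanishes'' rather than ``$\oPhi$ and $\oPsi$ coincide''---and your additional remarks on why $\oDeltapp,\oDeltapm$ restrict correctly are accurate bookkeeping that the paper leaves implicit.
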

\begin{proof}
	\th\ref{class sln contra} implies that there are constants $c_1, c_2 \in \R$ such that
	\begin{equation*}
		\oPhi P = c_1 \h{\oPipp P}{.}^p + c_2 \h{\oPipm P}{.}^p
	\end{equation*}
	for all $P \in \cP^n_o$.
	Since $c_1 \oDeltapp + c_2 \oDeltapm$ is a simple $\sln$-contravariant valuation which coincides with $\oPhi$ on $\cP^n_o$,
	the assertion follows from \th\ref{simple determined cPno}.
\end{proof}

\begin{theorem}\th\label{class sln contra Pn}
	Let $\oPhi \colon \cP^n \rightarrow \Cp$ be an $\sln$-contravariant valuation.
	Assume further that for every $y \in \R^n$ there exists a bounded open interval $I_y \subseteq (0,+\infty)$
	such that $\{\oPhi(sT^n)(y): s \in I_y\}$ is not dense in $\R$
	and that for every $y \in \R^n$ there exists a bounded open interval $J_y \subseteq (0,+\infty)$
	such that $\{\oPhi(sT^{n-1})(y): s \in J_y\}$ is not dense in $\R$,
	where $T^{n-1} := \conv\{e_1,\ldots,e_n\}$.
	If $n \geq 3$, then there exist constants $c_1, c_2, c_3, c_4 \in \R$ such that
	\begin{equation*}
		\oPhi P = c_1 \h{\oPippg P}{.}^p + c_2 \h{\oPipmg P}{.}^p + c_3 \h{\oPippl P}{.}^p + c_4 \h{\oPipml P}{.}^p
	\end{equation*}
	for all $P \in \cP^n$.
\end{theorem}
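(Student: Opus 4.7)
The approach is to layer the proof of the previous theorem with an additional reduction step in one dimension lower. First, the restriction $\oPhi|_{\cP^n_o}$ is an $\sln$-contravariant valuation satisfying the hypothesis of \th\ref{class sln contra}, so one obtains constants $c_1, c_2 \in \R$ with $\oPhi P = c_1 \h{\oPippg P}{.}^p + c_2 \h{\oPipmg P}{.}^p$ for $P \in \cP^n_o$ (using $\oPippg = \oPipp$ and $\oPipmg = \oPipm$ there). Set $\oPsi := \oPhi - c_1 \h{\oPippg}{.}^p - c_2 \h{\oPipmg}{.}^p$; by \th\ref{operator sln contra val rem} this is an $\sln$-contravariant valuation vanishing on $\cP^n_o$, and by \th\ref{sln contravariant Pn zero} it also vanishes on all polytopes of dimension at most $n-2$ and on $(n-1)$-dimensional polytopes whose affine hull contains $o$.

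The remaining task is to pin down $\oPsi$ on $(n-1)$-dimensional polytopes $P$ with $o \notin \aff P$. Since $\sln$ acts transitively on the set of affine hyperplanes missing the origin (the element $\diag(-1,-1,1,\ldots,1) \in \sln$ takes care of the side swap), $\sln$-contravariance reduces the analysis to $P = e_1 + Q$ with $Q \in \cP(H)$ full-dimensional in $H := e_1^\perp$. The stabilizer of the hyperplane $\{x_1 = 1\}$ inside $\sln$ consists of maps $\phi(e_1 + z) = e_1 + (w + Az)$ with $w \in H$ and $A \in \operatorname{SL}(n-1)$, with dual action $\phi^{-1}(y_1, y') = (y_1, A^{-1}(y' - y_1 w))$ on $y = (y_1, y') \in \R \times H$.

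I claim $\oPsi(e_1 + Q)(y) = \vol_{n-1}(Q)\,\bigl[A \langle e_1, y\rangle_+^p + B \langle -e_1, y\rangle_+^p\bigr]$ for constants $A, B \in \R$ independent of $Q$ and $y$. For $y \in H$ (so $y_1 = 0$), the formula $\phi^{-1} y = (0, A^{-1} y')$ shows that $Q \mapsto \oPsi(e_1 + Q)(y)$ is a translation-invariant simple real-valued valuation on $\cP(H) \cong \cP^{n-1}$ that transforms $\operatorname{SL}(n-1)$-equivariantly in $y$. Modeling the argument of \th\ref{class sln contra} one dimension lower---cut $e_1 + sT_H$, with $T_H := \conv\{o, e_2, \ldots, e_n\}$, by a linear hyperplane through $o$ to derive a Cauchy equation in $s$ for $\oPsi(e_1 + sT_H)(y)$, and transfer the non-density hypothesis from $\oPhi(sT^{n-1})(y)$ via the explicit $\sln$-element with first row $(1,1,\ldots,1)$ sending $T^{n-1}$ into $e_1 + T_H$---gives $\oPsi(e_1 + Q)(y) = C(y) \vol_{n-1}(Q)$; then $\operatorname{SL}(n-1)$-invariance of $C$ (using that $\operatorname{SL}(n-1)$ acts transitively on $H \setminus \{o\}$ for $n \geq 3$) combined with $p$-homogeneity forces $C \equiv 0$. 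For $y$ with $y_1 \neq 0$, translating $Q$ by $y'/y_1 \in H$ yields $\oPsi(e_1 + Q)(y) = \oPsi(e_1 + (Q - y'/y_1))(y_1 e_1)$, and a parallel Cauchy argument at $\pm e_1$, together with translation invariance of $\vol_{n-1}$, finishes the claim.

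Setting $c_4 := A - c_1$ and $c_3 := B - c_2$, a direct computation (in the spirit of \th\ref{eval T^n}) verifies that $c_1 \h{\oPippg P}{.}^p + c_2 \h{\oPipmg P}{.}^p + c_3 \h{\oPippl P}{.}^p + c_4 \h{\oPipml P}{.}^p = \oPhi P$ for $P = e_1 + Q$, hence by $\sln$-contravariance on every $(n-1)$-dimensional $P$ with $o \notin \aff P$. Therefore $\oPhi - c_1 \h{\oPippg}{.}^p - c_2 \h{\oPipmg}{.}^p - c_3 \h{\oPippl}{.}^p - c_4 \h{\oPipml}{.}^p$ is a simple $\sln$-contravariant valuation vanishing on $\cP^n_o$, which by \th\ref{simple determined cPno} forces it to vanish on $\cP^n$. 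The main obstacle is the third paragraph: rigorously carrying out the Cauchy functional equation for $e_1 + sT_H$, including the $\sln$-transfer of the non-density assumption from $T^{n-1}$ to $e_1 + T_H$ (which live in different hyperplanes) and the affine twist $\phi^{-1}(y_1, y') = (y_1, A^{-1}(y' - y_1 w))$ when $y_1 \neq 0$.
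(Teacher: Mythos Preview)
Your route differs from the paper's and is considerably harder than necessary. The paper does \emph{not} first pin down $\oPhi$ on $\cP^n_o$; instead it runs the proof of \th\ref{class sln contra} verbatim with $T^n$ replaced by $T^{n-1}=\conv\{e_1,\ldots,e_n\}$ (and \th\ref{sln contravariant simple} replaced by \th\ref{sln contravariant Pn zero}) to obtain constants $d_3,d_4$ with $\oPhi(sT^{n-1})=d_3\h{\oPippg(sT^{n-1})}{\cdot}^p+d_4\h{\oPipmg(sT^{n-1})}{\cdot}^p$. Subtracting these two terms yields a $\oPsi$ that vanishes on every $sT^{n-1}$, hence (by $\sln$-contravariance, \th\ref{sln contravariance gln}, \th\ref{sln contravariant Pn zero}, and inclusion--exclusion) on every polytope of dimension $\le n-1$: $\oPsi$ is \emph{simple}. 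Then \th\ref{class sln contra Pn simple} gives $\oPsi=d_1\oDeltapp+d_2\oDeltapm$, and unpacking $\oDeltapp,\oDeltapm$ produces the four constants.

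The ``main obstacle'' you flag is genuine, and the paper avoids it entirely by the choice of base simplex. Your family $e_1+sT_H$ lives in the fixed hyperplane $\{x_1=1\}$, which scalar multiplication does not preserve; this is exactly why transporting the Cauchy argument and the non-density hypothesis becomes awkward (the $\gln$-map taking $T^{n-1}$ to $e_1+sT_H$ has determinant $s^{n-1}$, so \th\ref{sln contravariance gln} sends you to $s^{(n-1)/n}T^{n-1}$ with a twisted direction). The family $sT^{n-1}$, by contrast, is closed under dilation, the cutting hyperplanes $H_\lambda$ through $o$ from the proof of \th\ref{class sln contra} slice it into $\gln$-images of itself, and the non-density hypothesis is stated for it directly---so the functional equation and the homogeneity step go through without modification. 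There is also a bookkeeping slip in your final paragraph: matching $\oPsi(e_1+Q)=\vol_{n-1}(Q)[A\langle e_1,\cdot\rangle_+^p+B\langle -e_1,\cdot\rangle_+^p]$ against $c_3\h{\oPippl(e_1+Q)}{\cdot}^p+c_4\h{\oPipml(e_1+Q)}{\cdot}^p$ gives $c_4=A$ and $c_3=B$, not $A-c_1$ and $B-c_2$.
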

\begin{proof}
	By replacing $T^n$ with $T^{n-1}$ and \th\ref{sln contravariant simple} with \th\ref{sln contravariant Pn zero}
	in the proof of \th\ref{class sln contra} we see that there exist constants $d_3, d_4$ such that
	\begin{equation*}
		\oPhi( s T^{n-1} ) = d_3 \h{\oPippg( s T^{n-1} )}{.}^p + d_4 \h{\oPipmg( s T^{n-1} )}{.}^p
	\end{equation*}
	for $s > 0$.
	The constants are given by
	\begin{equation*}
		d_3 = (n-1)! \oPhi(T^{n-1})(e_1) \quad \text{and} \quad d_4 = (n-1)! \oPhi(T^{n-1})(-e_1) .
	\end{equation*}
	Define $\oPsi \colon \cP^n \rightarrow \Cp$ by
	\begin{equation*}
		\oPsi P = \oPhi P - d_3 \h{\oPippg P}{.}^p - d_4 \h{\oPipmg P}{.}^p
	\end{equation*}
	for all $P \in \cP^n$.
	Note that $\oPsi$ is an $\sln$-contravariant valuation.
	Using \th\ref{sln contravariant Pn zero}, $\oPsi( sT^{n-1}) = 0$, the $\sln$-contravariance of $\oPsi$ and \th\ref{incl excl Pn} we see that $\oPsi$ is simple.
	Now \th\ref{class sln contra Pn simple} implies that there exist constants $d_1, d_2 \in \R$ such that
	\begin{equation*}
		\oPsi P = d_1 \oDeltapp P + d_2 \oDeltapm P
	\end{equation*}
	for all $P \in \cP^n$.
	The definition of $\oPsi$ yields
	\begin{align*}
		\oPhi P
		&= \oPsi P + d_3 \h{\oPippg P}{.}^p + d_4 \h{\oPipmg P}{.}^p \\
		&= d_1 \oDeltapp P + d_2 \oDeltapm P + d_3 \h{\oPippg P}{.}^p + d_4 \h{\oPipmg P}{.}^p \\
		&= d_1 \left( \h{\oPippg P}{.}^p - \h{\oPipml P}{.}^p \right) + d_2  \left( \h{\oPipmg P}{.}^p - \h{\oPippl P}{.}^p \right) \\
		&\hspace{1em}+ d_3 \h{\oPippg P}{.}^p + d_4 \h{\oPipmg P}{.}^p \\
		&= (d_1+d_3) \h{\oPippg P}{.}^p + (d_2+d_4) \h{\oPipmg P}{.}^p \\
		&\hspace{1em}- d_2 \h{\oPippl P}{.}^p - d_1 \h{\oPipml P}{.}^p
	\end{align*}
	for all $P \in \cP^n$.
	Define $c_1 = d_1+d_3$, $c_2 = d_2+d_4$, $c_3 = -d_2$ and $c_4 = -d_1$ to complete the proof.
\end{proof}

\begin{remark}
	Similar to the $\cP^n_o$ case, the additional assumptions on the boundedness in \th\ref{class sln contra Pn simple}
	and \th\ref{class sln contra Pn} just have to hold for only one standard basis vector and its reflection at the origin each.
\end{remark}

\begin{corollary}\th\label{class sln contra mink Pn}
	Let $\oPhi \colon \cP^n \rightarrow \cK^n_o$ be an $\sln$-contravariant $\Lp$-Minkowski valuation.
	If $n \geq 3$, then there exist constants $c_1, c_2, c_3, c_4 \geq 0$ such that
	\begin{equation*}
		\oPhi P = c_1 \oPippg P \pp c_2 \oPipmg P \pp c_3 \oPippl P \pp c_4 \oPipml P
	\end{equation*}
	for all $P \in \cP^n$.
\end{corollary}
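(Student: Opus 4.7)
The plan is to reduce the statement to \th\ref{class sln contra Pn} applied to the auxiliary map $P \mapsto \h{\oPhi P}{.}^p$. This map takes values in $\Cp$, since support functions are continuous and positively $1$-homogeneous, so their $p$-th powers are continuous and $p$-homogeneous. The equivalences recorded in Section~\ref{se: prelimSLn} ensure that the $\Lp$-Minkowski valuation property and the $\sln$-contravariance of $\oPhi$ translate exactly into the valuation property and $\sln$-contravariance of $P \mapsto \h{\oPhi P}{.}^p$ into $\Cp$.

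Next, I would verify that the boundedness hypothesis of \th\ref{class sln contra Pn} at both $T^n$ and $T^{n-1}$ is automatic: since $\oPhi P \in \cK^n_o$, every value $\h{\oPhi P}{y}^p$ is non-negative, so the sets $\{\h{\oPhi(sT^n)}{y}^p : s \in I\}$ and $\{\h{\oPhi(sT^{n-1})}{y}^p : s \in I\}$ lie in $[0,+\infty)$ and are therefore not dense in $\R$ for any bounded open interval $I \subseteq (0,+\infty)$. Applying \th\ref{class sln contra Pn} produces real constants $d_1, d_2, d_3, d_4$ with
\begin{equation*}
	\h{\oPhi P}{.}^p = d_1 \h{\oPippg P}{.}^p + d_2 \h{\oPipmg P}{.}^p + d_3 \h{\oPippl P}{.}^p + d_4 \h{\oPipml P}{.}^p
\end{equation*}
for all $P \in \cP^n$.

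The main obstacle is upgrading these to non-negative constants, and this is where \th\ref{eval T^n} does the work. Evaluating at $P = T^n$ at $\pm e_1$ kills the $\oPippl$ and $\oPipml$ terms (both vanish on $\cP^n_o$), and by \th\ref{eval T^n} this forces $d_1 = (n-1)!\, \h{\oPhi T^n}{e_1}^p$ and $d_2 = (n-1)!\, \h{\oPhi T^n}{-e_1}^p$, both clearly non-negative. For $d_3$ and $d_4$ a test polytope outside $\cP^n_o$ is needed; the translate $e_1 + T^n$ works, and the computations in \th\ref{eval T^n} show that at the arguments $e_2 - e_1$ and $e_1 - e_2$ exactly one of the four operator terms is nonzero, yielding $d_3 = (n-1)!\, \h{\oPhi(e_1+T^n)}{e_2-e_1}^p \geq 0$ and $d_4 = (n-1)!\, \h{\oPhi(e_1+T^n)}{e_1-e_2}^p \geq 0$.

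Finally, setting $c_i = \sqrt[p]{d_i}$ and using the identification of $\cK^n_o$ with $\Lp$-Minkowski addition as a subsemigroup of $\Cp$ via $K \mapsto \h{K}{.}^p$ yields the claimed formula. The only step with any real content is the non-negativity argument, and the calculations in \th\ref{eval T^n} were arranged precisely so that each of the four constants can be isolated at a tailored pair of polytope and direction.
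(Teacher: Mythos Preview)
Your proposal is correct and follows essentially the same route as the paper's own proof: pass to $P \mapsto \h{\oPhi P}{.}^p$, use non-negativity of support functions to verify the boundedness hypotheses of \th\ref{class sln contra Pn}, and then isolate each $d_i$ via the computations in \th\ref{eval T^n} at $T^n$ with $\pm e_1$ and at $e_1 + T^n$ with $\pm(e_2 - e_1)$ before taking $p$-th roots. The only difference is that you spell out explicitly why the boundedness hypotheses hold, which the paper leaves implicit.
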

\begin{proof}
	Since $\oPhi P \in \cK^n_o$, we have $\h{\oPhi P}{y} \geq 0$ for all $P \in \cP^n$ and $y \in \R^n$.
	Therefore the map
	\begin{equation*}
		P \mapsto \h{\oPhi P}{.}^p, \quad P \in \cP^n
	\end{equation*}
	satisfies the assumptions of \th\ref{class sln contra Pn}.
	Thus there exist constants $d_1, d_2, d_3, d_4 \in \R$ such that
	\begin{equation*}
		\h{\oPhi P}{.}^p = d_1 \h{\oPippg P}{.}^p + d_2 \h{\oPipmg P}{.}^p + d_3 \h{\oPippl P}{.}^p + d_4 \h{\oPipml P}{.}^p
	\end{equation*}
	for all $P \in \cP^n$.
	Using \th\ref{eval T^n} we obtain:
	\begin{align*}
		d_1 &= (n-1)! \h{\oPhi T^n}{e_1}^p ,\\
		d_2 &= (n-1)! \h{\oPhi T^n}{-e_1}^p ,\\
		d_3 &= (n-1)! \h{\oPhi (e_1 + T^n)}{e_2-e_1}^p ,\\
		d_4 &= (n-1)! \h{\oPhi (e_1 + T^n)}{e_1-e_2}^p .
	\end{align*}
	It follows that $d_1, d_2, d_3, d_4 \geq 0$.
	Defining $c_1 = \sqrt[p]{d_1}, c_2 = \sqrt[p]{d_2}, c_3 = \sqrt[p]{d_3}$ and $c_4 = \sqrt[p]{d_4}$ completes the proof.
\end{proof}

The second main theorem from the introduction now follows from \th\ref{class sln contra mink Pn}
and the fact that $\oPippg$, $\oPipmg$, $\oPippl$ and $\oPipml$ have the desired properties.

\begin{remark}
	\th\ref{class sln contra mink Pn} implies \th\ref{class sln contra mink}.
	To see this, note that we can extend an $\sln$-contravariant $\Lp$-Minkowski valuation $\oPhi$ on $\cP^n_o$ to $\cP^n$ by
	\begin{equation*}
		\oPhi P = \oPhi (P_o)
	\end{equation*}
	for all $P \in \cP^n$, where $P_o := \conv(\{0\} \cup P)$.
\end{remark}

	\begin{bibdiv}
		\begin{biblist}
			%%%%%%%%%%%%%%%%%%%%%%%%%%%%%%%%%%%%%%%%%%%%%%%%%%%%%%%%%%%%%%%%%%%%%%%%%%%%%%%%
\bib{AbardiaBernig2011}{article}{
   author={Abardia, Judit},
   author={Bernig, Andreas},
   title={Projection bodies in complex vector spaces},
   journal={Adv. Math.},
   volume={227},
   date={2011},
   number={2},
   pages={830--846},
%   issn={0001-8708},
%   review={\MR{2793024}},
%   doi={10.1016/j.aim.2011.02.013},
}

\bib{Alesker1999}{article}{
   author={Alesker, S.},
   title={Continuous rotation invariant valuations on convex sets},
   journal={Ann. of Math. (2)},
   volume={149},
   date={1999},
   number={3},
   pages={977--1005},
%   issn={0003-486X},
%   review={\MR{1709308 (2000i:52019)}},
%   doi={10.2307/121078},
}

\bib{Alesker2000}{article}{
   author={Alesker, S.},
   title={On P. McMullen's conjecture on translation invariant valuations},
   journal={Adv. Math.},
   volume={155},
   date={2000},
   number={2},
   pages={239--263},
%   issn={0001-8708},
%   review={\MR{1794712 (2001k:52013)}},
%   doi={10.1006/aima.2000.1918},
}

\bib{Alesker2001}{article}{
   author={Alesker, S.},
   title={Description of translation invariant valuations on convex sets
   with solution of P. McMullen's conjecture},
   journal={Geom. Funct. Anal.},
   volume={11},
   date={2001},
   number={2},
   pages={244--272},
%   issn={1016-443X},
%   review={\MR{1837364 (2002e:52015)}},
%   doi={10.1007/PL00001675},
}

\bib{Bernig2009}{article}{
   author={Bernig, Andreas},
   title={A Hadwiger-type theorem for the special unitary group},
   journal={Geom. Funct. Anal.},
   volume={19},
   date={2009},
   number={2},
   pages={356--372},
%   issn={1016-443X},
%   review={\MR{2545241 (2010k:53121)}},
%   doi={10.1007/s00039-009-0008-4},
}

\bib{BernigBroecker2007}{article}{
   author={Bernig, Andreas},
   author={Br{\"o}cker, Ludwig},
   title={Valuations on manifolds and Rumin cohomology},
   journal={J. Differential Geom.},
   volume={75},
   date={2007},
   number={3},
   pages={433--457},
%   issn={0022-040X},
%   review={\MR{2301452 (2008e:53146)}},
}

\bib{BernigFu_1}{article}{
   author={Bernig, Andreas},
   author={Fu, Joseph H. G.},
   title={Hermitian integral geometry},
   journal={Ann. of Math. (2)},
   volume={173},
   date={2011},
   number={2},
   pages={907--945},
%   issn={0003-486X},
%   review={\MR{2776365}},
%   doi={10.4007/annals.2011.173.2.7},
}

\bib{CampiGronchi2002}{article}{
   author={Campi, S.},
   author={Gronchi, P.},
   title={The $L^p$-Busemann-Petty centroid inequality},
   journal={Adv. Math.},
   volume={167},
   date={2002},
   number={1},
   pages={128--141},
%   issn={0001-8708},
%   review={\MR{1901248 (2003e:52011)}},
%   doi={10.1006/aima.2001.2036},
}

\bib{CianchiLutwakYangZhang2009}{article}{
   author={Cianchi, Andrea},
   author={Lutwak, Erwin},
   author={Yang, Deane},
   author={Zhang, Gaoyong},
   title={Affine Moser-Trudinger and Morrey-Sobolev inequalities},
   journal={Calc. Var. Partial Differential Equations},
   volume={36},
   date={2009},
   number={3},
   pages={419--436},
%   issn={0944-2669},
%   review={\MR{2551138 (2010h:46041)}},
%   doi={10.1007/s00526-009-0235-4},
}

\bib{Gardner2006}{book}{
   author={Gardner, Richard J.},
   title={Geometric tomography},
   series={Encyclopedia of Mathematics and its Applications},
   volume={58},
   edition={2},
   publisher={Cambridge University Press},
   place={Cambridge},
   date={2006},
   pages={xxii+492},
%   isbn={978-0-521-68493-4},
%   isbn={0-521-68493-5},
%   review={\MR{2251886 (2007i:52010)}},
}

\bib{Gruber2007}{book}{
   author={Gruber, Peter M.},
   title={Convex and discrete geometry},
   series={Grundlehren der Mathematischen Wissenschaften [Fundamental
   Principles of Mathematical Sciences]},
   volume={336},
   publisher={Springer},
   place={Berlin},
   date={2007},
   pages={xiv+578},
%   isbn={978-3-540-71132-2},
%   review={\MR{2335496 (2008f:52001)}},
}

\bib{Haberl2008}{article}{
   author={Haberl, Christoph},
   title={$L_p$ intersection bodies},
   journal={Adv. Math.},
   volume={217},
   date={2008},
   number={6},
   pages={2599--2624},
%   issn={0001-8708},
%   review={\MR{2397461 (2009a:52001)}},
%   doi={10.1016/j.aim.2007.11.013},
}

\bib{Haberl2009}{article}{
   author={Haberl, Christoph},
   title={Star body valued valuations},
   journal={Indiana Univ. Math. J.},
   volume={58},
   date={2009},
   number={5},
   pages={2253--2276},
%   issn={0022-2518},
%   review={\MR{2583498 (2011b:52018)}},
%   doi={10.1512/iumj.2009.58.3685},
}

\bib{Haberl_1}{article}{
   author={Haberl, Christoph},
   title={Blaschke valuations},
   journal={Amer. J. Math.},
   volume={133},
   number={3},
   date={2011},
   pages={717--751},
}

\bib{Haberl_2}{article}{
   author={Haberl, Christoph},
   title={Minkowski valuations intertwining the special linear group},
   journal={J. Eur. Math. Soc. (JEMS)},
   status={in press},
}

\bib{HaberlLudwig2006}{article}{
   author={Haberl, Christoph},
   author={Ludwig, Monika},
   title={A characterization of $L_p$ intersection bodies},
   journal={Int. Math. Res. Not.},
   date={2006},
   pages={Art. ID 10548, 29},
%   issn={1073-7928},
%   review={\MR{2250020 (2007k:52007)}},
%   doi={10.1155/IMRN/2006/10548},
}

\bib{HaberlSchuster2009_1}{article}{
   author={Haberl, Christoph},
   author={Schuster, Franz E.},
   title={Asymmetric affine $L_p$ Sobolev inequalities},
   journal={J. Funct. Anal.},
   volume={257},
   date={2009},
   number={3},
   pages={641--658},
%   issn={0022-1236},
%   review={\MR{2530600 (2010j:46068)}},
%   doi={10.1016/j.jfa.2009.04.009},
}

\bib{HaberlSchuster2009_2}{article}{
   author={Haberl, Christoph},
   author={Schuster, Franz E.},
   title={General $L_p$ affine isoperimetric inequalities},
   journal={J. Differential Geom.},
   volume={83},
   date={2009},
   number={1},
   pages={1--26},
%   issn={0022-040X},
%   review={\MR{2545028 (2010j:52026)}},
}

\bib{Klain1996}{article}{
   author={Klain, Daniel A.},
   title={Star valuations and dual mixed volumes},
   journal={Adv. Math.},
   volume={121},
   date={1996},
   number={1},
   pages={80--101},
%   issn={0001-8708},
%   review={\MR{1399604 (97i:52009)}},
%   doi={10.1006/aima.1996.0048},
}

\bib{Klain1997}{article}{
   author={Klain, Daniel A.},
   title={Invariant valuations on star-shaped sets},
   journal={Adv. Math.},
   volume={125},
   date={1997},
   number={1},
   pages={95--113},
%   issn={0001-8708},
%   review={\MR{1427802 (98a:52009)}},
%   doi={10.1006/aima.1997.1601},
}

\bib{Klain2000}{article}{
   author={Klain, Daniel A.},
   title={Even valuations on convex bodies},
   journal={Trans. Amer. Math. Soc.},
   volume={352},
   date={2000},
   number={1},
   pages={71--93},
%   issn={0002-9947},
%   review={\MR{1487620 (2000c:52003)}},
%   doi={10.1090/S0002-9947-99-02240-0},
}

\bib{KlainRota1997}{book}{
   author={Klain, Daniel A.},
   author={Rota, Gian-Carlo},
   title={Introduction to geometric probability},
   series={Lezioni Lincee. [Lincei Lectures]},
   publisher={Cambridge University Press},
   place={Cambridge},
   date={1997},
   pages={xiv+178},
%   isbn={0-521-59362-X},
%   isbn={0-521-59654-8},
%   review={\MR{1608265 (2001f:52009)}},
}

\bib{Ludwig2002_1}{article}{
   author={Ludwig, Monika},
   title={Projection bodies and valuations},
   journal={Adv. Math.},
   volume={172},
   date={2002},
   number={2},
   pages={158--168},
%   issn={0001-8708},
%   review={\MR{1942402 (2003j:52012)}},
%   doi={10.1016/S0001-8708(02)00021-X},
}

\bib{Ludwig2002_2}{article}{
   author={Ludwig, Monika},
   title={Valuations of polytopes containing the origin in their interiors},
   journal={Adv. Math.},
   volume={170},
   date={2002},
   number={2},
   pages={239--256},
%   issn={0001-8708},
%   review={\MR{1932331 (2003g:52022)}},
%   doi={10.1006/aima.2002.2077},
}

\bib{Ludwig2003}{article}{
   author={Ludwig, Monika},
   title={Ellipsoids and matrix-valued valuations},
   journal={Duke Math. J.},
   volume={119},
   date={2003},
   number={1},
   pages={159--188},
%   issn={0012-7094},
%   review={\MR{1991649 (2004e:52015)}},
%   doi={10.1215/S0012-7094-03-11915-8},
}

\bib{Ludwig2005}{article}{
   author={Ludwig, Monika},
   title={Minkowski valuations},
   journal={Trans. Amer. Math. Soc.},
   volume={357},
   date={2005},
   number={10},
   pages={4191--4213 (electronic)},
%   issn={0002-9947},
%   review={\MR{2159706 (2006f:52005)}},
%   doi={10.1090/S0002-9947-04-03666-9},
}

\bib{Ludwig2006}{article}{
   author={Ludwig, Monika},
   title={Intersection bodies and valuations},
   journal={Amer. J. Math.},
   volume={128},
   date={2006},
   number={6},
   pages={1409--1428},
%   issn={0002-9327},
%   review={\MR{2275906 (2008a:52012)}},
}

\bib{Ludwig2010}{article}{
   author={Ludwig, Monika},
   title={Minkowski areas and valuations},
   journal={J. Differential Geom.},
   volume={86},
   date={2010},
   number={1},
   pages={133--161},
%   issn={0022-040X},
%   review={\MR{2772547}},
}

\bib{LudwigReitzner1999}{article}{
   author={Ludwig, Monika},
   author={Reitzner, Matthias},
   title={A characterization of affine surface area},
   journal={Adv. Math.},
   volume={147},
   date={1999},
   number={1},
   pages={138--172},
%   issn={0001-8708},
%   review={\MR{1725817 (2000j:52018)}},
%   doi={10.1006/aima.1999.1832},
}

\bib{LudwigReitzner2010}{article}{
   author={Ludwig, Monika},
   author={Reitzner, Matthias},
   title={A classification of ${\rm SL}(n)$ invariant valuations},
   journal={Ann. of Math. (2)},
   volume={172},
   date={2010},
   number={2},
   pages={1219--1267},
%   issn={0003-486X},
%   review={\MR{2680490 (2011g:52025)}},
%   doi={10.4007/annals.2010.172.1223},
}

\bib{LutwakYangZhang2000_1}{article}{
   author={Lutwak, Erwin},
   author={Yang, Deane},
   author={Zhang, Gaoyong},
   title={$L_p$ affine isoperimetric inequalities},
   journal={J. Differential Geom.},
   volume={56},
   date={2000},
   number={1},
   pages={111--132},
%   issn={0022-040X},
%   review={\MR{1863023 (2002h:52011)}},
}

\bib{LutwakYangZhang2000_2}{article}{
   author={Lutwak, Erwin},
   author={Yang, Deane},
   author={Zhang, Gaoyong},
   title={A new ellipsoid associated with convex bodies},
   journal={Duke Math. J.},
   volume={104},
   date={2000},
   number={3},
   pages={375--390},
%   issn={0012-7094},
%   review={\MR{1781476 (2001j:52011)}},
%   doi={10.1215/S0012-7094-00-10432-2},
}

\bib{LutwakYangZhang2002_1}{article}{
   author={Lutwak, Erwin},
   author={Yang, Deane},
   author={Zhang, Gaoyong},
   title={The Cramer-Rao inequality for star bodies},
   journal={Duke Math. J.},
   volume={112},
   date={2002},
   number={1},
   pages={59--81},
%   issn={0012-7094},
%   review={\MR{1890647 (2003f:52006)}},
%   doi={10.1215/S0012-9074-02-11212-5},
}

\bib{LutwakYangZhang2002_2}{article}{
   author={Lutwak, Erwin},
   author={Yang, Deane},
   author={Zhang, Gaoyong},
   title={Sharp affine $L_p$ Sobolev inequalities},
   journal={J. Differential Geom.},
   volume={62},
   date={2002},
   number={1},
   pages={17--38},
%   issn={0022-040X},
%   review={\MR{1987375 (2004d:46039)}},
}

\bib{LutwakYangZhang2004}{article}{
   author={Lutwak, Erwin},
   author={Yang, Deane},
   author={Zhang, Gaoyong},
   title={Moment-entropy inequalities},
   journal={Ann. Probab.},
   volume={32},
   date={2004},
   number={1B},
   pages={757--774},
%   issn={0091-1798},
%   review={\MR{2039942 (2004k:60026)}},
%   doi={10.1214/aop/1079021463},
}

\bib{LutwakYangZhang2010}{article}{
   author={Lutwak, Erwin},
   author={Yang, Deane},
   author={Zhang, Gaoyong},
   title={Orlicz projection bodies},
   journal={Adv. Math.},
   volume={223},
   date={2010},
   number={1},
   pages={220--242},
%   issn={0001-8708},
%   review={\MR{2563216 (2011g:28008)}},
%   doi={10.1016/j.aim.2009.08.002},
}

\bib{McMullen1993}{article}{
   author={McMullen, Peter},
   title={Valuations and dissections},
   conference={
      title={Handbook of convex geometry, Vol.\ A, B},
   },
   book={
      publisher={North-Holland},
      place={Amsterdam},
   },
   date={1993},
   pages={933--988},
%   review={\MR{1243000 (95f:52018)}},
}

\bib{ParapatitsSchuster_1}{article}{
   author={Parapatits, Lukas},
   author={Schuster, Franz E.},
   title={The Steiner Formula for Minkowski Valuations},
   status={submitted},
}

\bib{Schneider1993}{book}{
   author={Schneider, Rolf},
   title={Convex bodies: the Brunn-Minkowski theory},
   series={Encyclopedia of Mathematics and its Applications},
   volume={44},
   publisher={Cambridge University Press},
   place={Cambridge},
   date={1993},
   pages={xiv+490},
%   isbn={0-521-35220-7},
%   review={\MR{1216521 (94d:52007)}},
%   doi={10.1017/CBO9780511526282},
}

\bib{SchneiderSchuster2006}{article}{
   author={Schneider, Rolf},
   author={Schuster, Franz E.},
   title={Rotation equivariant Minkowski valuations},
   journal={Int. Math. Res. Not.},
   date={2006},
   pages={Art. ID 72894, 20},
%   issn={1073-7928},
%   review={\MR{2272092 (2008b:52009)}},
%   doi={10.1155/IMRN/2006/72894},
}

\bib{SchneiderWeil2008}{book}{
   author={Schneider, Rolf},
   author={Weil, Wolfgang},
   title={Stochastic and integral geometry},
   series={Probability and its Applications (New York)},
   publisher={Springer-Verlag},
   place={Berlin},
   date={2008},
   pages={xii+693},
%   isbn={978-3-540-78858-4},
%   review={\MR{2455326 (2010g:60002)}},
%   doi={10.1007/978-3-540-78859-1},
}

\bib{Schuster2010}{article}{
   author={Schuster, Franz E.},
   title={Crofton measures and Minkowski valuations},
   journal={Duke Math. J.},
   volume={154},
   date={2010},
   number={1},
   pages={1--30},
%   issn={0012-7094},
%   review={\MR{2668553 (2011g:52026)}},
%   doi={10.1215/00127094-2010-033},
}

\bib{SchusterWannerer_1}{article}{
   author={Schuster, Franz E.},
   author={Wannerer, Thomas},
   title={$GL(n)$ contravariant Minkowski valuations},
   journal={Trans. Amer. Math. Soc.},
   status={in press},
}

\bib{Stancu2002}{article}{
   author={Stancu, Alina},
   title={The discrete planar $L_0$-Minkowski problem},
   journal={Adv. Math.},
   volume={167},
   date={2002},
   number={1},
   pages={160--174},
%   issn={0001-8708},
%   review={\MR{1901250 (2003e:52010)}},
%   doi={10.1006/aima.2001.2040},
}

\bib{Stancu2003}{article}{
   author={Stancu, Alina},
   title={On the number of solutions to the discrete two-dimensional $L_0$-Minkowski problem},
   journal={Adv. Math.},
   volume={180},
   date={2003},
   number={1},
   pages={290--323},
%   issn={0001-8708},
%   review={\MR{2019226 (2004k:52003)}},
%   doi={10.1016/S0001-8708(03)00005-7},
}

\bib{Steineder2008}{article}{
   author={Steineder, Christian},
   title={Subword complexity and projection bodies},
   journal={Adv. Math.},
   volume={217},
   date={2008},
   number={5},
   pages={2377--2400},
%   issn={0001-8708},
%   review={\MR{2388098 (2009b:52029)}},
%   doi={10.1016/j.aim.2007.11.006},
}

\bib{Volland1957}{article}{
   author={Volland, Walter},
   title={Ein Fortsetzungssatz f\"ur additive Eipolyederfunktionale im
   euklidischen Raum},
   language={German},
   journal={Arch. Math. (Basel)},
   volume={8},
   date={1957},
   pages={144--149},
%   issn={0003-889X},
%   review={\MR{0092176 (19,1074d)}},
}

\bib{Wannerer_1}{article}{
   author={Wannerer, Thomas},
   title={$GL(n)$ equivariant Minkowski valuations},
   journal={Indiana Univ. Math. J.},
   status={in press},
}

\bib{WernerYe2008}{article}{
   author={Werner, Elisabeth},
   author={Ye, Deping},
   title={New $L_p$ affine isoperimetric inequalities},
   journal={Adv. Math.},
   volume={218},
   date={2008},
   number={3},
   pages={762--780},
%   issn={0001-8708},
%   review={\MR{2414321 (2009g:52010)}},
%   doi={10.1016/j.aim.2008.02.002},
}

\bib{Zhang1999}{article}{
   author={Zhang, Gaoyong},
   title={The affine Sobolev inequality},
   journal={J. Differential Geom.},
   volume={53},
   date={1999},
   number={1},
   pages={183--202},
%   issn={0022-040X},
%   review={\MR{1776095 (2001m:53136)}},
}

		\end{biblist}
	\end{bibdiv}
\end{document}